\title[When $\mathcal{B}(X)$ is DF but it does not have stable rank one]{Banach spaces whose algebras of operators are Dedekind-finite but they do not have stable rank one}
\author{Bence Horváth}
\newtheorem{Thm}{Theorem}[section]
\newtheorem{Lem}[Thm]{Lemma}
\newtheorem{Prop}[Thm]{Proposition}
\newtheorem{Cor}[Thm]{Corollary}
\newtheorem{Que}[Thm]{Question}
\theoremstyle{definition}
\newtheorem{Def}[Thm]{Definition}
\newtheorem{Ex}[Thm]{Example}
\newtheorem{Rem}[Thm]{Remark}
\newtheorem*{Ack}{Acknowledgement}
\numberwithin{equation}{section}
\DeclareMathOperator*{\Ran}{\mathrm{Ran}}
\DeclareMathOperator*{\Ker}{\mathrm{Ker}}
\DeclareMathOperator*{\inv}{\mathrm{inv}}
\DeclareMathOperator*{\interior}{\mathrm{int}}
\DeclareMathOperator*{\codim}{\mathrm{codim}}
\newcommand{\Addresses}{{
  \bigskip
  \footnotesize

  \textsc{Department of Mathematics and Statistics, Fylde College, Lancaster University, Lancaster, LA1 4YF, United Kingdom}\par\nopagebreak
  \textit{E-mail address}: \texttt{b.horvath@lancaster.ac.uk, horvath@math.cas.cz}
}}
\begin{document}

\maketitle
	
\begin{abstract}
In this note we examine the connection between the stable rank one and Dedekind-finite property of the algebra of operators on a Banach space $X$. We show that for the complex indecomposable but not hereditarily indecomposable Banach space $X_{\infty}$ constructed by Tarbard (\textit{Ph.D. Thesis, University of Oxford,} 2013), the algebra of operators $\mathcal{B}(X_{\infty})$ is Dedekind-finite but does not have stable rank one. While this sheds some light on the Banach space structure of $X_{\infty}$ itself, we observe that the indecomposable but not hereditarily indecomposable Banach space constructed by Gowers and Maurey (\textit{Math. Ann.}, 1997) does not possess this property. We also show that if $K$ is the connected ``Koszmider'' space constructed by Plebanek in ZFC (\textit{Topology and its Applications}, 2004), then $\mathcal{B}(C(K, \mathbb{R}))$ is Dedekind-finite but does not have stable rank one.
\end{abstract}	
	
\section{Introduction and basic terminology}

Let $A$ be a ring. We say that an element $p \in A$ is \textit{idempotent} if $p^2 = p$ holds. Let $p,q \in A$ be idempotents, we say that \textit{$p$ and $q$ are equivalent}, if there exist $a,b \in A$ such that $ab=p$ and $ba=q$. If $p,q \in A$ are equivalent idempotents, we denote this by \textit{$p \sim q$}. It is easy to see that $\sim$ is an equivalence relation on the set of idempotents in $A$. Two idempotents $p,q \in A$ are \textit{orthogonal} if $pq=0=qp$.
\begin{Def}
Let $A$ be a unital ring with identity $1_A$. Then $A$ is called
\begin{enumerate}
\item \textit{Dedekind-finite} or \textit{directly finite} or \textit{DF} for short, if the only idempotent $p \in A$ with $p \sim 1_A$ is the identity $1_A$,
\item \textit{Dedekind-infinite} if it is not Dedefind-finite,
\item \textit{properly infinite} if there exist orthogonal idempotents $p,q \in A$ such that $p,q \sim 1_A$.
\end{enumerate}
\end{Def}
It is easy to see that a properly infinite ring is Dedekind-infinite. Clearly every commutative, unital ring is Dedekind-finite. Another easy example is the matrix ring $M_n(\mathbb{C}), \, (n \geq 1)$ since an $(n \times n)$ complex matrix is left-invertible if and only if it is right-invertible. \\
Therefore it is natural to examine the unital Banach algebra $\mathcal{B}(X)$ from this perspective, where $\mathcal{B}(X)$ denotes the \textit{bounded linear operators} on an infinite-dimensional Banach space $X$. In this note all Banach spaces are assumed to be complex, unless explicitly stated otherwise. The systematic study of Dedekind-(in)finiteness of $\mathcal{B}(X)$ was laid out by Laustsen in \cite{ringfinofopalgs}, where the author characterises Dedekind-finiteness and properly infiniteness of $\mathcal{B}(X)$ in terms of the complemented subspaces of $X$. For our purposes the former is of greater importance, therefore we recall this result here:
\begin{Lem}\label{dfequiv}(\cite[Corollary~1.5]{ringfinofopalgs})
Let $X$ be a Banach space. Then $\mathcal{B}(X)$ is Dedekind-finite if and only if no proper, complemented subspace of $X$ is isomorphic to $X$ as a Banach space.
\end{Lem}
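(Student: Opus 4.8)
The plan is to unwind the ring-theoretic notion of equivalence to $1_{\mathcal{B}(X)} = \id_X$ and reinterpret it geometrically. Note first that $\mathcal{B}(X)$ fails to be Dedekind-finite exactly when there is an idempotent $p \in \mathcal{B}(X)$ with $p \neq \id_X$ together with operators $a, b \in \mathcal{B}(X)$ satisfying $ab = p$ and $ba = \id_X$. Hence the entire proof reduces to showing that the existence of such a triple $(p,a,b)$ is equivalent to the existence of a proper, complemented subspace of $X$ that is isomorphic to $X$.

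For the implication ``no proper complemented copy of $X$ in $X$ $\Longrightarrow$ $\mathcal{B}(X)$ is Dedekind-finite'', I would argue the contrapositive. Suppose $Y \subsetneq X$ is complemented, say $X = Y \oplus Z$ with $Z \neq \{0\}$, and let $u \colon X \to Y$ be an isomorphism. Let $\iota \colon Y \hookrightarrow X$ denote the inclusion, let $P \colon X \to X$ be the projection onto $Y$ along $Z$, and let $P_0 \colon X \to Y$ be its corestriction. Then I set $a := \iota \circ u$ and $b := u^{-1} \circ P_0$, both in $\mathcal{B}(X)$. Since $P_0 \circ \iota = \id_Y$, one computes $ba = u^{-1} \circ (P_0 \circ \iota) \circ u = \id_X$ and $ab = \iota \circ (u \circ u^{-1}) \circ P_0 = \iota \circ P_0 = P$; thus $p := ab$ is an idempotent with $p \sim \id_X$ and $p \neq \id_X$ (as $Y \neq X$), so $\mathcal{B}(X)$ is not Dedekind-finite.

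For the converse, I would start from an idempotent $p \neq \id_X$ with $ab = p$ and $ba = \id_X$ for suitable $a, b \in \mathcal{B}(X)$, and set $Y := a[X]$. From $ba = \id_X$ it follows that $\|ax\| \geq \|x\|/\|b\|$, so $a$ is bounded below and $Y$ is a closed subspace onto which $a$ maps isomorphically; also $b$ is surjective. Then $p = ab$ is an idempotent with range $a[b[X]] = a[X] = Y$, so $p$ is a projection onto $Y$, whence $Y$ is complemented in $X$ and isomorphic to $X$. Finally $Y \neq X$, since an idempotent whose range is all of $X$ must equal $\id_X$, contrary to $p \neq \id_X$. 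This exhibits the required proper, complemented subspace.

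The argument is essentially bookkeeping once the dictionary between ``$p \sim \id_X$'' and ``complemented subspace isomorphic to $X$'' is in place; the two points that need a little care are that left-invertibility of $a$ forces $a[X]$ to be closed (so that $Y$ is a genuine Banach-space summand and $a$ is an isomorphism onto it), and that an idempotent with full range is the identity, which is exactly what makes $Y$ proper. I do not expect any serious obstacle beyond distinguishing inclusions from corestrictions when chasing the maps.
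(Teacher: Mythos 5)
Your argument is correct and complete; the paper does not prove this lemma at all but simply cites \cite[Corollary~1.5]{ringfinofopalgs}, and your computation is exactly the standard dictionary between ``$p\sim I_X$ with $p$ idempotent'' and ``complemented subspace isomorphic to $X$'' (cf.\ Lemma~\ref{idempeqbanach}), which is also how the cited proof runs. One cosmetic slip: the paragraph you announce as proving ``no proper complemented copy $\Rightarrow$ Dedekind-finite'' actually proves the contrapositive of the \emph{other} implication (a proper complemented copy isomorphic to $X$ yields Dedekind-infiniteness), while your ``converse'' paragraph proves the announced one; since both directions are in fact established, this is only a labelling issue, not a gap.
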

Let us recall that an infinite-dimensional Banach space $X$ is \textit{indecomposable}, if there are no closed, infinite-dimensional subspaces $Y,Z$ of $X$ such that $X$ can be written as the topological direct sum $X = Y \oplus Z$, this latter being a short-hand notation for $X= Y + Z$ and $Y \cap Z = \lbrace 0 \rbrace$. A Banach space $X$ is \textit{hereditarily indecomposable} (or \textit{HI} for short) if every closed, infinite-dimensional subspace of $X$ is indecomposable.
As it is observed in \cite[Corollary~1.7]{ringfinofopalgs}, every hereditarily indecomposable Banach space $X$ satisfies the conditions of Lemma \ref{dfequiv}. However, as we shall demonstrate in Corollary \ref{hiissr1}, if $X$ is an HI space, then $\mathcal{B}(X)$ in fact possesses the stronger property of having stable rank one. This definition was introduced by Rieffel in \cite{rieffel}:
\begin{Def}
A unital Banach algebra $A$ has \textit{stable rank one} if the group of invertible elements $\inv(A)$ is dense in $A$ with respect to the norm topology.
\end{Def}	

\section{Algebras of operators with stable rank one and their connection to Dedekind-finiteness}

The following observation is an immediate corollary of \cite[Proposition~3.1]{rieffel}, we include a short proof for the reader's convenience.	

\begin{Lem}\label{dfisstr1}
A unital Banach algebra with stable rank one is Dedekind-finite.
\end{Lem}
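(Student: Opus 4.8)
The plan is to unwind the definition of $p \sim 1_A$ and exploit the fact that, in a Banach algebra of stable rank one, one-sided invertibility automatically upgrades to two-sided invertibility. So suppose $p \in A$ is an idempotent with $p \sim 1_A$. Discarding the trivial case $A = \lbrace 0 \rbrace$ (where $1_A = 0$ and the statement holds vacuously), and using that $\sim$ is symmetric, I would fix $a, b \in A$ with $ab = 1_A$ and $ba = p$; the goal then becomes $p = 1_A$. It suffices to show that $a \in \inv(A)$: for then $b = a^{-1}(ab) = a^{-1}$, and hence $p = ba = a^{-1}a = 1_A$.

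To prove $a$ is invertible, first note that $ab = 1_A \neq 0$ forces $a \neq 0$. Now invoke stable rank one: since $\inv(A)$ is dense in $A$, choose $c \in \inv(A)$ with $\|b - c\| < \|a\|^{-1}$. Then
\[
\|1_A - ac\| = \|ab - ac\| = \|a(b-c)\| \leq \|a\|\,\|b - c\| < 1,
\]
so $ac$ lies in the open unit ball centred at $1_A$ and is therefore invertible. Consequently $a = (ac)c^{-1}$ is a product of two invertible elements, hence $a \in \inv(A)$, which finishes the argument.

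I do not expect a genuine obstacle here: the only real idea is the standard perturbation trick above (which is exactly the content of \cite[Proposition~3.1]{rieffel}), and the remainder is bookkeeping with the definition of equivalence of idempotents. The only points requiring a little care are the degenerate case $A = \lbrace 0 \rbrace$ and the legitimacy of the estimate $\|b - c\| < \|a\|^{-1}$, i.e.\ that $a \neq 0$, which is immediate from $ab = 1_A$.
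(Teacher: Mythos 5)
Your proof is correct and is essentially the paper's own argument: both use the standard perturbation trick of approximating one factor of the pair witnessing $p\sim 1_A$ by an invertible element, deducing that the product is within distance $1$ of $1_A$ and hence invertible, so that one factor is invertible and $p=1_A$ follows. The only difference is cosmetic (you swap the roles of $a$ and $b$ via symmetry of $\sim$ and also note the degenerate case $A=\lbrace 0\rbrace$, which the paper leaves implicit).
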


\begin{proof}
Let $A$ be a Banach algebra with stable rank one. Assume $p \in A$ is idempotent such that $p \sim 1_A$, then there exist $a,b \in A$ such that $p=ab$ and $1_A = ba$. Let $u \in \inv(A)$ be such that $\Vert a-u \Vert < \Vert b \Vert^{-1}$, then $\Vert 1_A -bu \Vert = \Vert ba - bu \Vert \leq \Vert b \Vert \Vert a-u \Vert <1$. So in particular $bu \in \inv(A)$ holds, and consequently $b=buu^{-1} \in \inv(A)$. From this and $1_A = ba$ we get $a=b^{-1}$, consequently $p=ab=1_A$. Thus $A$ is Dedekind-finite.
\end{proof}

Note however that the converse of the previous lemma is clearly false. We demonstrate this with an example which will be essential in the proof of our main result, Theorem \ref{main}.

Let us recall that in a Banach algebra $A$ an element $a \in A$ is a \textit{topological zero divisor} if $\inf \lbrace \Vert xa \Vert + \Vert ax \Vert : \, x \in A, \Vert x \Vert = 1 \rbrace = 0$. It is a standard result from the fundamental theory of Banach Algebras, (see for example \cite[Chapter~2,~Theorem~14]{Bonsall}) that for a unital Banach algebra $A$ the \textit{topological boundary of $\inv(A)$}, that is $\partial(\inv(A)):= \overline{\inv(A)} \backslash \inv(A)$, is contained in the set of topological zero divisors of $A$. 
\\
In what follows $\mathbb{N}$ denotes the natural numbers excluding zero, and $\mathbb{N}_0 := \mathbb{N} \cup \lbrace 0 \rbrace$.

\begin{Ex}\label{exampleell1}
The unital Banach algebra $\ell_1(\mathbb{N}_0)$ (endowed with the convolution product $(a_n)_{n=0}^{\infty} * (b_n)_{n=0}^{\infty} := \sum\limits_{m=0}^{\infty} \sum\limits_{k=0}^m a_k b_{m-k}$) is Dedekind-finite but does not have stable rank one. The former is trivial since $\ell_1(\mathbb{N}_0)$ is commutative. Now let us show that it does not have stable rank one. This in fact is contained in the proof of \cite[Proposition~4.7]{dragakania}, we include the argument here for the sake of completeness. Let $(\delta_n)_{n \in \mathbb{N}_0}$ stand for the canonical basis of $\ell_1(\mathbb{N}_0)$, clearly $\delta_0$ is the identity in $\ell_1(\mathbb{N}_0)$. Observe that $\delta_1$ is a non-invertible element in $\ell_1(\mathbb{N}_0)$. We now show that $\delta_1$ is not a topological zero divisor. To see this, let $x= (x_n)_{n=0}^{\infty} \in \ell_1(\mathbb{N}_0)$ be arbitrary. Then
\begin{align}
\Vert x \ast \delta_1 \Vert = \left\Vert \sum\limits_{n \in \mathbb{N}} x_{n-1} \delta_n \right\Vert = \sum\limits_{n \in \mathbb{N}_0} \vert x_n \vert = \Vert x \Vert.
\end{align}
Thus by the discussion preceeding the example we see that $\delta_1 \notin \partial(\inv(\ell_1(\mathbb{N}_0)))$. Hence we conclude that $\delta_1 \notin \overline{\inv(\ell_1(\mathbb{N}_0))}$, therefore $\ell_1(\mathbb{N}_0)$ cannot have stable rank one.
\end{Ex}

As we will see in Corollary \ref{hiissr1}, all the examples given in \cite{ringfinofopalgs} such that $\mathcal{B}(X)$ is Dedekind-finite have stable rank one. Thus the following question naturally arises:
\begin{Que}
Does there exist a Banach space $X$ such that $\mathcal{B}(X)$ is Dedekind-finite but it does not have stable rank one?
\end{Que}
The purpose of the following is to answer this question in the positive.
\\
Recall that if $A$ is a unital algebra over a field $K$ and $C$ is a unital subalgebra then $\inv(C) \subseteq \inv(A) \cap C$ holds but there is not equality in general. In the following, if $J$ is a two-sided ideal of $A$ we introduce the notation $\tilde{J}:= K1_A + J$. 

\begin{Lem}\label{subalginv}
Let $A$ be an algebra over a field $K$ and let $J \trianglelefteq A$ be a proper, two-sided ideal. Then for the unital subalgebra $\tilde{J}$ the equality $\inv(\tilde{J})= \inv(A) \cap \tilde{J}$ holds.
\end{Lem}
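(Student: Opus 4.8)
The plan is to split into the two inclusions, of which $\inv(\tilde{J}) \subseteq \inv(A) \cap \tilde{J}$ is immediate: if $x \in \tilde{J}$ has an inverse computed inside $\tilde{J}$, that inverse already lies in $\tilde{J} \subseteq A$, so $x \in \inv(A) \cap \tilde{J}$. Hence all the work goes into the reverse inclusion, namely showing that whenever $x \in \tilde{J}$ is invertible \emph{in $A$}, its $A$-inverse in fact lies in $\tilde{J}$ (and is then automatically the inverse in $\tilde{J}$).

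To do this I would write $x = \lambda 1_A + j$ with $\lambda \in K$ and $j \in J$, and first dispose of the degenerate case $\lambda = 0$: then $x = j \in J$, and if $y \in A$ satisfies $xy = 1_A$ we obtain $1_A = jy \in J$ because $J$ absorbs right multiplication, forcing $J = A$ and contradicting properness. So $\lambda \neq 0$. Now set $y := x^{-1} \in A$ and rearrange $xy = 1_A$ as $\lambda y = 1_A - jy$. Since $j \in J$ and $J$ is a (two-sided, hence right) ideal, $jy \in J$, so the right-hand side lies in $\tilde{J} = K 1_A + J$; dividing by $\lambda \neq 0$ and using that $\tilde{J}$ is a $K$-subspace gives $y \in \tilde{J}$. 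Then $x, y \in \tilde{J}$ with $xy = yx = 1_A$, so $x \in \inv(\tilde{J})$, which is what was needed.

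This argument is routine and contains no real obstacle; the only point that repays a moment's attention is noticing that the properness of $J$ is used for precisely one thing — ruling out $\lambda = 0$ — and that once $\lambda \neq 0$ the single identity $\lambda y = 1_A - jy$ already pins the inverse down inside $\tilde{J}$, with no further computation required.
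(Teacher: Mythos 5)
Your proof is correct and follows essentially the same route as the paper: both use properness of $J$ only to rule out $\lambda=0$, and both then exhibit the inverse of $\lambda 1_A + j$ in the form $\lambda^{-1}1_A - \lambda^{-1}\,(\text{element of } J)$, which is exactly the element the paper writes down explicitly and verifies. Your variant merely derives this membership directly from the identity $xy=1_A$ rather than checking a candidate inverse by computation; no gap.
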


\begin{proof}
It is clear that $\tilde{J}$ is a unital subalgebra of $A$. Thus we only need to show the inclusion $\inv(A) \cap \tilde{J} \subseteq \inv(\tilde{J})$. To see this let us pick an arbitrary $\lambda \in K$ and $j \in J$ such that $\lambda 1_A + j \in \inv(A)$. Clearly $\lambda \neq 0$ otherwise $j \in \inv(A)$ which contradicts $J$ being a proper subset of $A$. Now it is clear that $a:= \lambda^{-1}1_A - \lambda^{-1}(\lambda 1_A + j)^{-1} j \in K1_A + J$, and a simple calculation shows that $a(\lambda 1_A + j) =1_A = (\lambda 1_A + j)a$ holds, proving the claim.
\end{proof}

\begin{Rem}\label{unitasubalg}
If $A$ is a complex unital Banach algebra and $J \trianglelefteq A$ is a proper, closed, two-sided ideal in $A$ then $\tilde{J}:= \mathbb{C} 1_A + J$ is a closed, unital subalgebra of $A$. (Closedness follows from the fact that $\mathbb{C} 1_A$ and $J$ are respectively finite-dimensional and closed subspaces of the Banach space $A$.) Also, $\tilde{J}$ is equal to the closed unital subalgebra in $A$ generated by the set $\lbrace 1_A \rbrace \cup J$.
\end{Rem}

\begin{Lem}\label{emptyint}
Let $A$ be a complex, unital Banach algebra and let $a \in A$ be such that $0 \in \mathbb{C}$ is not in the interior of the spectrum $\sigma_A(a)$. Then $a \in \overline{\inv(A)}$.
\end{Lem}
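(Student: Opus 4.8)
The plan is to exploit the elementary point-set fact that, for any subset $S$ of a topological space, the complement of $\interior(S)$ equals the closure of the complement of $S$. Applying this with $S = \sigma_A(a) \subseteq \mathbb{C}$, the hypothesis that $0 \notin \interior(\sigma_A(a))$ says precisely that $0$ lies in the closure of $\mathbb{C} \setminus \sigma_A(a)$, that is, in the closure of the resolvent set $\rho_A(a) := \mathbb{C} \setminus \sigma_A(a)$.

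Consequently one can choose a sequence $(\lambda_n)_{n \in \mathbb{N}}$ in $\rho_A(a)$ with $\lambda_n \to 0$. By the very definition of the spectrum, $\lambda_n \notin \sigma_A(a)$ means that $a - \lambda_n 1_A \in \inv(A)$ for every $n \in \mathbb{N}$. Since $\Vert (a - \lambda_n 1_A) - a \Vert = \vert \lambda_n \vert \, \Vert 1_A \Vert \to 0$ as $n \to \infty$, the element $a$ is the norm-limit of a sequence of invertible elements of $A$, and therefore $a \in \overline{\inv(A)}$.

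There is no real obstacle in this argument; the only thing to be slightly careful about is the set-theoretic identity $\mathbb{C} \setminus \interior(\sigma_A(a)) = \overline{\mathbb{C} \setminus \sigma_A(a)}$, which is what converts the hypothesis about the interior of the spectrum into the existence of points of the resolvent set arbitrarily close to $0$. Everything else is immediate from the definitions of $\inv(A)$, $\sigma_A(a)$, and the norm-continuity of scalar multiplication.
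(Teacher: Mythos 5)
Your argument is correct and is essentially identical to the paper's proof: both convert the hypothesis via the identity $\mathbb{C} \setminus \interior(\sigma_A(a)) = \overline{\mathbb{C} \setminus \sigma_A(a)}$ into a sequence $(\lambda_n)$ in the resolvent set tending to $0$, and then observe that $a - \lambda_n 1_A \in \inv(A)$ converges in norm to $a$. No issues.
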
 

\begin{proof}
By the hypothesis it follows that $0 \notin \interior(\sigma_A(a)) = \mathbb{C} \backslash \left( \overline{\mathbb{C} \backslash \sigma_A(a)} \right)$. Thus there exists a sequence $(\lambda_n)_{n \in \mathbb{N}}$ in the resolvent set of the element $a$ converging to $0 \in \mathbb{C}$. Therefore $(a-\lambda_n 1_A)_{n \in \mathbb{N}}$ is a sequence of invertible elements in $A$ converging to $a$.
\end{proof}

An operator $T \in \mathcal{B}(X)$ is called \textit{inessential} if for any $S \in \mathcal{B}(X)$ it follows that $\dim(\Ker(I_X - ST)) < \infty$ and $\codim_X(\Ran(I_X -ST)) < \infty$. The set $\mathcal{E}(X)$ of inessential operators forms a proper, closed, two-sided ideal of $\mathcal{B}(X)$, see \cite[Remark~4.3.5]{pietsch}.

\begin{Prop}\label{subalgsr1}
Let $X$ be a Banach space, and let $J \trianglelefteq \mathcal{B}(X)$ be a closed, two-sided ideal with $J \subseteq \mathcal{E}(X)$. Then for any $\alpha \in \mathbb{C}$ and $T \in J$, $\alpha I_X + T \in \overline{\inv(\tilde{J})}$ holds, and therefore $\tilde{J}$ has stable rank one.
\end{Prop}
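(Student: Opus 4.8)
The plan is to reduce the claim to Lemma~\ref{emptyint} applied inside the Banach algebra $\tilde{J}$. Fix $\alpha \in \mathbb{C}$ and $T \in J$. If $\alpha = 0$ then $T \in J \subseteq \tilde{J}$ and $0$ is obviously a non-invertible element of $\tilde{J}$, but we still want $T \in \overline{\inv(\tilde{J})}$; so the argument cannot simply split off the case $\alpha = 0$, and we should treat $\alpha I_X + T$ uniformly by controlling the spectrum $\sigma_{\tilde{J}}(\alpha I_X + T)$. The key point is that since $T$ is inessential, $\alpha I_X + T$ is a Fredholm operator of index $0$ for every $\alpha \neq 0$ (this is exactly what inessentiality buys us via the Fredholm alternative: $I_X - (-\alpha^{-1}T)$ has finite-dimensional kernel and finite-codimensional, hence closed, range, and index $0$), so $\alpha I_X + T \in \inv(\mathcal{B}(X))$ unless $0$ is an eigenvalue. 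More importantly, the essential spectrum of $T$ is $\{0\}$, so $\sigma_{\mathcal{B}(X)}(\alpha I_X + T) \setminus \{\alpha\}$ consists only of isolated eigenvalues of finite multiplicity, accumulating at most at $\alpha$.

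First I would pin down $\sigma_{\tilde{J}}(\alpha I_X + T)$. By Remark~\ref{unitasubalg}, $\tilde{J}$ is a closed unital subalgebra of $\mathcal{B}(X)$, and by Lemma~\ref{subalginv} we have $\inv(\tilde{J}) = \inv(\mathcal{B}(X)) \cap \tilde{J}$; since $\mu I_X - (\alpha I_X + T) = (\mu - \alpha) I_X - T \in \tilde{J}$ for every $\mu \in \mathbb{C}$, it follows that $\sigma_{\tilde{J}}(\alpha I_X + T) = \sigma_{\mathcal{B}(X)}(\alpha I_X + T)$ — the spectrum does not grow when we pass to this particular subalgebra. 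Now, from the previous paragraph $\sigma_{\mathcal{B}(X)}(\alpha I_X + T)$ is a compact set whose only possible accumulation point is $\alpha$; in particular, for $\mu \neq \alpha$ the point $\mu$ is isolated in the spectrum and hence not interior to it, and $\alpha$ itself, being at most an accumulation point of a set whose other points are isolated, also fails to be interior (a neighbourhood of $\alpha$ meeting the spectrum only in $\alpha$ plus isolated points cannot be contained in the spectrum). Therefore $\interior\bigl(\sigma_{\tilde{J}}(\alpha I_X + T)\bigr) = \emptyset$; in particular $0 \notin \interior\bigl(\sigma_{\tilde{J}}(\alpha I_X + T)\bigr)$.

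Applying Lemma~\ref{emptyint} inside the complex unital Banach algebra $\tilde{J}$ to the element $\alpha I_X + T$ then yields $\alpha I_X + T \in \overline{\inv(\tilde{J})}$. Since $\alpha \in \mathbb{C}$ and $T \in J$ were arbitrary, every element of $\tilde{J} = \mathbb{C} I_X + J$ lies in $\overline{\inv(\tilde{J})}$, i.e. $\overline{\inv(\tilde{J})} = \tilde{J}$, which is precisely the statement that $\tilde{J}$ has stable rank one.

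The main obstacle I anticipate is the spectral-theoretic input: one must justify carefully that inessentiality of $T$ forces the essential spectrum of $T$ to be $\{0\}$ and hence that the non-zero part of $\sigma_{\mathcal{B}(X)}(T)$ (and of $\sigma_{\mathcal{B}(X)}(\alpha I_X + T)$) consists of isolated eigenvalues of finite algebraic multiplicity — this is the Riesz–Schauder-type theory for inessential (equivalently, Riesz) operators, and it is the one place where we genuinely use $J \subseteq \mathcal{E}(X)$ rather than just "$J$ a proper closed ideal". Once that structure theorem is invoked, the emptiness of the interior of the spectrum, and with it the rest of the argument, is routine. One should double-check the edge case $\alpha = 0$ (then $\sigma_{\tilde{J}}(T) = \sigma_{\mathcal{B}(X)}(T)$ still has $0$ as its only accumulation point and empty interior, so the argument goes through verbatim) and, if $\dim X < \infty$ is not excluded elsewhere, note that in that case $\mathcal{E}(X) = \mathcal{B}(X)$, $J$ is forced to be $\{0\}$, and the statement is trivial.
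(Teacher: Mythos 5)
Your proposal is correct and follows essentially the same route as the paper: identify $\sigma_{\tilde{J}}(\alpha I_X+T)$ with $\sigma_{\mathcal{B}(X)}(\alpha I_X+T)$ via Lemma~\ref{subalginv}, use the Riesz-operator theory of inessential operators to see that this spectrum has empty interior, and conclude with Lemma~\ref{emptyint}; the only differences are cosmetic (you apply Lemma~\ref{subalginv} directly to $\alpha I_X+T$ where the paper applies it to $T$ and then translates by the Spectral Mapping Theorem, and you get empty interior from isolatedness rather than countability). One small caveat: your parenthetical ``inessential (equivalently, Riesz)'' overstates the relationship --- inessential operators are Riesz (this is the cited implication, \cite[Lemma~5.6.1]{caradus}), but not every Riesz operator is inessential; only that one implication is needed, so the proof is unaffected.
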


\begin{proof}
Let us pick $\alpha \in \mathbb{C}$ and $T \in J$ arbitrary. It is an immediate corollary of Lemma \ref{subalginv} that $\sigma_{\tilde{J}}(T) = \sigma_{\mathcal{B}(X)}(T)$. Now by the Spectral Mapping Theorem $\sigma_{\tilde{J}}(\alpha I_X + T) = \alpha + \sigma_{\tilde{J}}(T)$,
putting this together with the previous we conclude that
\begin{align}
\sigma_{\tilde{J}}(\alpha I_X + T) = \alpha + \sigma_{\mathcal{B}(X)}(T).
\end{align}
Since $T \in J \subseteq \mathcal{E}(X)$, it follows from \cite[Lemma~5.6.1]{caradus} that $T$ is a \textit{Riesz operator} (see \cite[Definition~3.1.1]{caradus}), thus $\sigma_{\mathcal{B}(X)}(T) \backslash \lbrace 0 \rbrace$ has no accumulation point, thus $\sigma_{\mathcal{B}(X)}(T)$ must be countable. Consequently $\sigma_{\tilde{J}}(\alpha I_X + T)$ must be countable, thus it has empty interior, so in particular Lemma \ref{emptyint} yields $\alpha I_X + T \in \overline{\inv(\tilde{J})}$.
\end{proof}

\begin{Rem}
Let us note that in the previous proposition the assumption that the ideal is contained in the inessential operators cannot be dropped in general. To see this we consider the $p^{\text{th}}$ quasi-reflexive James space $\mathcal{J}_p$, where $1<p< \infty$. Since the closed, two-sided ideal $\mathcal{W}(\mathcal{J}_p)$ of weakly compact operators is one-codimensional in $\mathcal{B}(\mathcal{J}_p)$, it is in particular a complemented subspace of $\mathcal{B}(\mathcal{J}_p)$ and therefore $\mathcal{B}(\mathcal{J}_p)= \mathbb{C}I_{\mathcal{J}_p} + \mathcal{W}(\mathcal{J}_p)$ holds. On the other hand, as observed in \cite[Propostition~1.13]{ringfinofopalgs}, the Banach algebra $\mathcal{B}(\mathcal{J}_p)$ is Dedekind-infinite so by Lemma \ref{dfisstr1} it cannot have stable rank one.
\end{Rem}

On a Banach space $X$ an operator $T \in \mathcal{B}(X)$ is called \textit{strictly singular} if there is no infinite-dimensional subspace $Y$ of $X$ such that $T \vert_Y$ is an isomorphism onto its range. The set of strictly singular operators on $X$ is denoted by $\mathcal{S}(X)$ and it is a closed, two-sided ideal in $\mathcal{B}(X)$. By \cite[Theorem~5.6.2]{caradus} the containment $\mathcal{S}(X) \subseteq \mathcal{E}(X)$ also holds.
\begin{Cor}\label{hiissr1}
For a complex hereditarily indecomposable Banach space $X$ the Banach algebra $\mathcal{B}(X)$ has stable rank one.
\end{Cor}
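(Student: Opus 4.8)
The plan is to deduce this from Proposition \ref{subalgsr1} together with the fundamental structural theorem of Gowers and Maurey on operators on hereditarily indecomposable spaces. Recall that, in the complex case, if $X$ is an HI Banach space then every $T \in \mathcal{B}(X)$ admits a (unique) decomposition $T = \lambda I_X + S$ with $\lambda \in \mathbb{C}$ and $S$ strictly singular; equivalently $\mathcal{B}(X) = \mathbb{C}I_X + \mathcal{S}(X)$, which in the notation of the excerpt is precisely $\widetilde{\mathcal{S}(X)}$. (This is where complex scalars are used: for real HI spaces one only obtains that $\mathcal{B}(X)/\mathcal{S}(X)$ is finite-dimensional.)

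First I would record the elementary facts about $\mathcal{S}(X)$ needed to feed it into Proposition \ref{subalgsr1}: by the discussion preceding the corollary, $\mathcal{S}(X)$ is a closed, two-sided ideal of $\mathcal{B}(X)$ with $\mathcal{S}(X) \subseteq \mathcal{E}(X)$; it is moreover proper, since $X$ is infinite-dimensional and hence $I_X$ restricted to $X$ is an isomorphism onto its range, so $I_X \notin \mathcal{S}(X)$ (alternatively, properness is automatic from $\mathcal{S}(X) \subseteq \mathcal{E}(X)$ and $\mathcal{E}(X)$ being proper).

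Then I would simply apply Proposition \ref{subalgsr1} with $J := \mathcal{S}(X)$: the proposition yields that $\widetilde{\mathcal{S}(X)}$ has stable rank one. Invoking the Gowers--Maurey/Ferenczi theorem to identify $\widetilde{\mathcal{S}(X)} = \mathbb{C}I_X + \mathcal{S}(X)$ with $\mathcal{B}(X)$, we conclude that $\mathcal{B}(X)$ itself has stable rank one, which is the assertion. As a by-product (via Lemma \ref{dfisstr1}) one recovers the observation of \cite[Corollary~1.7]{ringfinofopalgs} that $\mathcal{B}(X)$ is Dedekind-finite, now strengthened.

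The only non-routine ingredient is the cited structural theorem expressing $\mathcal{B}(X)$ as a one-dimensional extension of $\mathcal{S}(X)$ for complex HI spaces; everything else is a direct quotation of Proposition \ref{subalgsr1}, so I expect no real obstacle beyond stating that input correctly (and taking care that it is the complex-scalar version that is being used).
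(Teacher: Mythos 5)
Your proposal is correct and coincides with the paper's own argument: the paper likewise quotes the Gowers--Maurey theorem that $\mathcal{B}(X) = \mathbb{C}I_X + \mathcal{S}(X)$ for complex HI spaces and then applies Proposition \ref{subalgsr1} with $J = \mathcal{S}(X)$, using $\mathcal{S}(X) \subseteq \mathcal{E}(X)$. Your additional remarks on properness of $\mathcal{S}(X)$ and the complex-scalar caveat are accurate but not needed beyond what the paper states.
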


\begin{proof}
As it was proven by Gowers and Maurey in \cite[Theorem~18]{GM0}, for any complex HI space $\mathcal{B}(X) = \mathbb{C} I_X + \mathcal{S}(X)$ holds. Together with Proposition \ref{subalgsr1} the result immediately follows.
\end{proof}
The result above is known, see for example in \cite{dragakania} and the text preceeding Theorem 4.16, although deduced in a slightly different way to ours.

The following simple algebraic lemma is the key step in the proof our main result.
\begin{Lem}\label{hereditarydf}
Let $A$ be a unital algebra over a field $K$ and let $J \trianglelefteq A$ be a two-sided ideal such that both $\tilde{J}$ and $A/J$ are Dedekind-finite. Let $\pi: \, A \rightarrow A/J$ denote the quotient map. If $\pi \left[ \inv(A) \right] = \inv \left( A/J \right)$ holds then $A$ is Dedekind-finite.
\end{Lem}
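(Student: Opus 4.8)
The plan is to take an idempotent $p \in A$ with $p \sim 1_A$ and show $p = 1_A$, exploiting the three hypotheses (Dedekind-finiteness of $\tilde{J}$ and of $A/J$, and the lifting property $\pi[\inv(A)] = \inv(A/J)$) one after another. So suppose $a, b \in A$ satisfy $ab = p$ and $ba = 1_A$. First I would push everything down to the quotient: applying $\pi$ gives $\pi(a)\pi(b) = \pi(p)$ and $\pi(b)\pi(a) = 1_{A/J}$, so $\pi(p) \sim 1_{A/J}$ in $A/J$, and since $A/J$ is Dedekind-finite we get $\pi(p) = 1_{A/J}$, i.e.\ $1_A - p \in J$. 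Moreover $\pi(a), \pi(b)$ are then mutually inverse in $A/J$, so $\pi(a), \pi(b) \in \inv(A/J)$.

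Next I would use the lifting hypothesis: since $\pi(b) \in \inv(A/J) = \pi[\inv(A)]$, there is $v \in \inv(A)$ with $\pi(v) = \pi(b)$, hence $b - v \in J$. Now the idea is to replace the relation $ba = 1_A$ by a relation living inside $\tilde{J}$. Consider $c := v^{-1} b$. Then $\pi(c) = \pi(v)^{-1}\pi(b) = 1_{A/J}$, so $c \in \tilde{J}$ (in fact $c - 1_A \in J$); similarly I want an element on the other side. From $ba = 1_A$ we get $v^{-1} b a = v^{-1}$, i.e.\ $c \cdot a = v^{-1}$, so $c \cdot (a v) = 1_A$. Set $d := av$; then $\pi(d) = \pi(a)\pi(v) = \pi(a)\pi(b) = \pi(p) = 1_{A/J}$, so $d \in \tilde{J}$ as well. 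Thus $c, d \in \tilde{J}$ with $cd = 1_A$ (from $c = v^{-1}b$, $d = av$, and $bab = b$... let me instead verify directly: $cd = v^{-1} b a v = v^{-1} \cdot 1_A \cdot v = 1_A$), and $dc = avv^{-1}b = ab = p$. Hence $p \sim 1_A$ \emph{within the subalgebra} $\tilde{J}$, and $p \in \tilde{J}$ since $p - 1_A \in J$. As $\tilde{J}$ is Dedekind-finite, $p = 1_{\tilde{J}} = 1_A$, which is what we wanted.

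The main thing to be careful about is the bookkeeping with the quotient map and making sure the witnessing elements $c, d$ genuinely lie in $\tilde{J}$ and genuinely witness $p \sim_{\tilde{J}} 1_A$; the computation $cd = 1_A$ and $dc = p$ is short but must be done with the correct substitutions, and one must check $p \in \tilde{J}$, which follows from $1_A - p \in J$ established in the first step. There is no analytic content here — everything is purely algebraic — so I expect no real obstacle beyond this careful substitution; the lifting hypothesis $\pi[\inv(A)] = \inv(A/J)$ is exactly what is needed to produce the invertible element $v$ that conjugates $b$ into $\tilde{J}$.
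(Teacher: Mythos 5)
Your proof is correct and follows essentially the same route as the paper: use Dedekind-finiteness of $A/J$ to get $\pi(p)=1_{A/J}$, lift the resulting invertible image via the hypothesis $\pi[\inv(A)]=\inv(A/J)$, and transfer the equivalence $p\sim 1_A$ into $\tilde{J}$, where Dedekind-finiteness finishes the argument. Your bookkeeping is in fact a slight streamlining of the paper's: by multiplying by the lifted invertible $v$ on the appropriate sides you exhibit $p$ itself (which lies in $\tilde{J}$ since $1_A-p\in J$) as equivalent to $1_A$ inside $\tilde{J}$ via the witnesses $c=v^{-1}b$, $d=av$, whereas the paper works with the conjugate $cpc^{-1}$ and verifies the analogous identities by a short explicit computation.
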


\begin{proof}
Let $p \in A$ be an idempotent such that $p \sim 1_A$. Then there exist $a,b \in A$ such that $ab=1_A$ and $ba=p$. The identities $\pi(a) \pi(b) = \pi(1_A)$ and $\pi(b) \pi(a)= \pi(p)$ show that $\pi(p)$ is an idempotent in $A/J$ such that $\pi(p) \sim \pi(1_A)$. Since $A/J$ is DF it follows that $\pi(p)=\pi(1_A)$, equivalently $\pi(b) \pi(a) = \pi(1_A)$ and consequently $\pi(a) \in \inv \left( A/J \right)$. By the assumption $\pi$ restricted to the set of invertible elements of $A$ surjects onto the set of invertible elements of $A/J$ therefore there exists $c \in \inv(A)$ such that $\pi(a) = \pi(c)$, equivalently $a-c \in J$. Thus $c^{-1} -b = c^{-1}ab- c^{-1}cb = c^{-1}(a-c)b \in J$. Let us define $a':=(a-c)c^{-1}$ and $b':=c(b-c^{-1})$, it is clear from the previous that $a',b' \in J$. Now we show that the following identities hold:
\begin{itemize}
\item $(1_A+a')(1_A+b')=1_A$ or equivalently $a'+b'+a'b'=0$,
\item $(1_A +b')(1_A +a')= cpc^{-1}$ or equivalently $a'+b'+b'a'=cpc^{-1}-1_A$.
\end{itemize}
To see these, we observe that from the definitions of $a'$ and $b'$ we obtain
\begin{align}
a'+b'=(a-c)c^{-1} + c(b-c^{-1})= ac^{-1} + cb -2 \cdot 1_A,
\end{align}
\begin{align}
b'a' = c(b-c^{-1})(a-c)c^{-1} = c(ba-bc-c^{-1}a +1_A)c^{-1} = cpc^{-1} - cb -ac^{-1} +1_A,
\end{align}
\begin{align}
a'b' = (a-c)c^{-1}c(b-c^{-1}) = ab -ac^{-1} -cb +1_A = 2 \cdot 1_A -ac^{-1} -cb.
\end{align}
The above immediately yield the required identities. Thus we obtained that $cpc^{-1}$ is an idempotent in $\tilde{J}$ equivalent to $1_A$. Since $\tilde{J}$ is DF it follows that $cpc^{-1}= 1_A$. This is, $p=1_A$ which concludes the proof.
\end{proof}
In what follows, if $K$ is a compact Hausdorff space then $C(K)$ denotes the complex valued continuous functions on $K$. If $X$ is a Banach space then $\mathcal{K}(X)$ denotes the closed, two-sided ideal of compact operators. By \cite[Theorems~4.4.4 and 5.6.2]{caradus} the containment $\mathcal{K}(X) \subseteq \mathcal{E}(X)$ holds.
\begin{Rem}
Let us note here that in the previous lemma, the condition that the invertible elements in $A$ surject onto the invertible elements in $A/J$ is not superfluous. To see this, we recall some basic properties of the \textit{Toeplitz algebra}, see \cite[Example~9.4.4]{rordam} for full details of the construction. Let $\mathcal{H}$ be a separable Hilbert space and let $S \in \mathcal{B}(\mathcal{H})$ be the right shift operator, let $S^* \in \mathcal{B}(\mathcal{H})$ denote its adjoint. The unital sub-$C^*$-algebra of $\mathcal{B}(\mathcal{H})$ generated by $S$ is called the Toeplitz-algebra $\mathcal{T}$. We recall that $\mathcal{K}(\mathcal{H}) \subseteq \mathcal{T}$ and that $\mathcal{T}/ \mathcal{K}(\mathcal{H})$ is isomorphic to $C(\mathbb{T})$, where $\mathbb{T}$ is the unit circle. Since $C(\mathbb{T})$ is commutative, it is clearly Dedekind-finite. As is well-known, (see \cite[Corollary~5]{corlar} or by Proposition \ref{subalgsr1} above) $\tilde{\mathcal{K}}(\mathcal{H})$ has stable rank one thus by Lemma \ref{dfisstr1} it is also Dedekind-finite. On the other hand, $S^* S = I_{\mathcal{H}}$ and $SS^* \neq I_{\mathcal{H}}$, thus $\mathcal{T}$ is Dedekind-infinite.
\end{Rem}

For a unital Banach algebra $A$ let $\exp(A):= \lbrace \exp(a): \, a \in A \rbrace$. Recall that $\exp(A) \subseteq \inv(A)$ and when $A$ is commutative, $\exp(A)$ is both a subgroup and the connected component of the identity in $\inv(A)$. In other words, $\exp(A)$ is the maximal connected subset of $\inv(A)$ -ordered by inclusion- containing $1_A$. For further details we refer the reader to \cite[Corollary~2.4.27]{Dales}.
\begin{Lem}\label{surjifconn}
Let $A$ be a unital Banach algebra and suppose $J \trianglelefteq A$ is a closed, two-sided ideal in $A$ such that $A/J$ is commutative. Let $\pi: \, A \rightarrow A/J$ denote the quotient map. If $\inv (A/J)$ is connected then $\pi \left[ \exp(A) \right] = \inv(A/J)$ holds. In particular $\pi \left[ \inv(A) \right] = \inv(A/J)$.
\end{Lem}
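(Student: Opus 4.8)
The plan is to exploit the compatibility of the exponential map with the continuous homomorphism $\pi$, together with the structural description of $\exp$ in a commutative Banach algebra recalled just before the statement. First I would record the elementary identity $\pi(\exp(a)) = \exp(\pi(a))$ for every $a \in A$; this is immediate because $\pi$ is a norm-continuous unital algebra homomorphism and $\exp$ is given by a norm-convergent power series, so $\pi$ may be applied term by term. Since $J$ is closed, $A/J$ is a unital Banach algebra, and being commutative, $\exp(A/J)$ is precisely the connected component of $1_{A/J}$ in $\inv(A/J)$ by \cite[Corollary~2.4.27]{Dales}. The hypothesis that $\inv(A/J)$ is connected then forces $\exp(A/J) = \inv(A/J)$.

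Next I would establish the two inclusions giving $\pi[\exp(A)] = \inv(A/J)$. For ``$\subseteq$'': by the identity above, $\pi[\exp(A)] \subseteq \exp(A/J) = \inv(A/J)$. For ``$\supseteq$'': given $y \in \inv(A/J) = \exp(A/J)$, write $y = \exp(b)$ for some $b \in A/J$; using surjectivity of $\pi$, choose $a \in A$ with $\pi(a) = b$, so that $\exp(a) \in \exp(A)$ and $\pi(\exp(a)) = \exp(\pi(a)) = \exp(b) = y$. Hence $y \in \pi[\exp(A)]$, and the claimed equality follows.

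Finally, the ``in particular'' clause is a consequence of the chain of inclusions $\inv(A/J) = \pi[\exp(A)] \subseteq \pi[\inv(A)] \subseteq \inv(A/J)$, where the first inclusion uses $\exp(A) \subseteq \inv(A)$ and the last uses that a unital homomorphism sends invertible elements to invertible elements. I do not expect a genuine obstacle here: the only point requiring care is checking that $A/J$ is a commutative unital Banach algebra so that the cited description of $\exp(A/J)$ applies, which is exactly why $J$ is assumed to be closed.
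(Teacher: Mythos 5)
Your proposal is correct and follows essentially the same route as the paper: the identity $\pi(\exp(a)) = \exp(\pi(a))$ from term-by-term application of $\pi$ to the exponential series, combined with connectedness and commutativity giving $\inv(A/J) = \exp(A/J)$, and surjectivity of $\pi$ yielding the reverse inclusion. The ``in particular'' clause is handled just as in the paper, so there is nothing to add.
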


\begin{proof}
Since $A/J$ is commutative and $\inv(A/J)$ is connected it follows that $\inv(A/J) = \exp(A/J)$. We now observe that $\exp(A/J) = \pi[\exp(A)]$ holds, since for any $a \in A$, the series expansion of $\exp(a)$ converges (absolutely) in $A$ and the quotient map $\pi$ is a continuous surjective algebra homomorphism; thus it readily follows that $\pi(\exp(a)) = \exp(\pi(a))$. The second part of the claim follows from $\pi[\inv(A)] \subseteq \inv(A/J)$.
\end{proof}

\begin{Lem}\label{ellinvconnected}
The group $\inv(\ell_1(\mathbb{N}_0))$ is connected.
\end{Lem}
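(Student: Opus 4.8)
The plan is to prove the stronger statement that $\inv(\ell_1(\mathbb{N}_0))$ is \emph{path}-connected, by joining an arbitrary invertible element to the identity $\delta_0$ in two stages: first to an invertible polynomial, then to $\delta_0$. Throughout I would identify $\ell_1(\mathbb{N}_0)$ (with convolution) with the Wiener-type algebra of absolutely convergent power series via $(a_n)_{n=0}^{\infty} \mapsto f$, where $f(z) = \sum_{n=0}^{\infty} a_n z^n$. The relevant standard facts are that $\ell_1(\mathbb{N}_0)$ is generated as a Banach algebra by $\delta_1$, that $\sigma_{\ell_1(\mathbb{N}_0)}(\delta_1) = \overline{\mathbb{D}}$, and hence that the characters of $\ell_1(\mathbb{N}_0)$ are exactly the point evaluations $f \mapsto f(z)$ for $z \in \overline{\mathbb{D}}$; in particular $a = (a_n)_{n=0}^{\infty}$ is invertible if and only if its associated power series $f$ has no zero in the closed unit disc $\overline{\mathbb{D}}$.

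For the first stage, given $a \in \inv(\ell_1(\mathbb{N}_0))$ I would take its partial sums $s_N := \sum_{n=0}^{N} a_n \delta_n$. Since $\sum_n |a_n| < \infty$ we have $\Vert s_N - a \Vert \to 0$, so for $N$ large enough $\Vert s_N - a \Vert < \Vert a^{-1} \Vert^{-1}$; for such $N$ and every $t \in [0,1]$ the element $ta + (1-t)s_N$ is within distance $\Vert a^{-1} \Vert^{-1}$ of $a$ and is therefore invertible, so $t \mapsto ta + (1-t)s_N$ is a path in $\inv(\ell_1(\mathbb{N}_0))$ from $a$ to the polynomial $p := s_N$. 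Being the endpoint of a path inside $\inv(\ell_1(\mathbb{N}_0))$, $p$ is invertible, and hence by the remarks above $p(z) = \sum_{n=0}^{N} a_n z^n$ has no zero in $\overline{\mathbb{D}}$.

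For the second stage I would exploit that the zero set of a polynomial is explicit: since $p$ has no zero in $\overline{\mathbb{D}}$, in particular $a_0 = p(0) \neq 0$, and factoring over $\mathbb{C}$ gives $p(z) = a_0 \prod_{j=1}^{m}(1 - z/\zeta_j)$ with $|\zeta_j| > 1$ for all $j$ (empty product if $p$ is constant). The scalar $a_0$ can be joined to $1$ within $\mathbb{C} \setminus \{0\}$, giving a path from $a_0 \delta_0$ to $\delta_0$; and for each $j$ the map $t \mapsto \delta_0 - (t/\zeta_j)\delta_1$ is a path joining $\delta_0$ to the factor $1 - z/\zeta_j$ which stays in $\inv(\ell_1(\mathbb{N}_0))$, because $|tz| \le 1 < |\zeta_j|$ for all $z \in \overline{\mathbb{D}}$, $t \in [0,1]$. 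Since multiplication on $\inv(\ell_1(\mathbb{N}_0))$ is continuous, the pointwise product of these $m+1$ paths is a path in $\inv(\ell_1(\mathbb{N}_0))$ between $p$ and $\delta_0$. Concatenating with the path from the first stage connects $a$ to $\delta_0$, so $\inv(\ell_1(\mathbb{N}_0))$ is path-connected, hence connected.

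The only point requiring care — and in that sense the ``main obstacle'' — is that one cannot simply take a holomorphic logarithm of $a$ via the functional calculus: the spectrum $\sigma(a) = f(\overline{\mathbb{D}})$ of an invertible element need not be simply connected (it can be an annulus, e.g. when $f = \exp g$), so the substance of the argument really is the reduction to a polynomial plus the explicit factorisation. (Alternatively, one could invoke the Arens–Royden theorem: the Gelfand space of $\ell_1(\mathbb{N}_0)$ is $\overline{\mathbb{D}}$, which is contractible, whence $\inv(\ell_1(\mathbb{N}_0))/\exp(\ell_1(\mathbb{N}_0)) \cong \check{H}^1(\overline{\mathbb{D}};\mathbb{Z}) = 0$ and $\exp(\ell_1(\mathbb{N}_0))$ is connected; but the elementary argument above is preferable for a self-contained note.)
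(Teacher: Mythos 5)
Your argument is correct, but it takes a genuinely different route from the paper. The paper identifies the character space of $\ell_1(\mathbb{N}_0)$ with $\overline{\mathbb{D}}$ and then invokes the Arens--Royden theorem: since $\overline{\mathbb{D}}$ is simply connected, $\inv(\ell_1(\mathbb{N}_0))/\exp(\ell_1(\mathbb{N}_0))$ is trivial, so $\inv(\ell_1(\mathbb{N}_0)) = \exp(\ell_1(\mathbb{N}_0))$ is connected --- exactly the alternative you sketch in your closing parenthesis. Your main argument instead stays elementary: it uses the Gelfand-theoretic description of invertibility (non-vanishing of the associated power series on $\overline{\mathbb{D}}$), the norm-density of polynomials together with the openness of $\inv$ to slide any invertible element along a segment to an invertible polynomial, and then the explicit factorisation $p(z)=a_0\prod_{j=1}^m(1-z/\zeta_j)$ with $|\zeta_j|>1$ to contract $p$ to $\delta_0$ through invertibles; each step checks out (the segment stays within $\Vert a^{-1}\Vert^{-1}$ of $a$, each factor path $t\mapsto \delta_0-(t/\zeta_j)\delta_1$ stays invertible because $|t/\zeta_j|<1$, and continuity of multiplication lets you multiply the paths). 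What each approach buys: yours is self-contained, avoids Arens--Royden entirely, and proves the stronger statement of path-connectedness with explicit paths; the paper's is much shorter given the cited machinery and directly yields $\inv=\exp$, though for the application (Lemma \ref{surjifconn}) connectedness alone suffices, since commutativity of the quotient then gives $\inv=\exp$ anyway. Your observation that a naive holomorphic logarithm via functional calculus can fail (e.g.\ when the spectrum is an annulus around the origin) correctly identifies why some genuine argument --- either yours or Arens--Royden --- is needed.
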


\begin{proof}
Let $A:= \ell_1(\mathbb{N}_0)$. It is known (see for example \cite[Theorem~4.6.9]{Dales}) that the character space $\Gamma_A$ of $A$ is homeomorphic to the closed unit disc $\overline{\mathbb{D}}$. Thus by the Arens--Royden Theorem (see \cite[3.5.19~Theorem]{Palmer} and the text preceeding it) we obtain the following isomorphism of groups:
\begin{align}
\inv(A) / \exp(A) \simeq \inv(C(\overline{\mathbb{D}})) / \exp(C(\overline{\mathbb{D}})) \simeq \pi^1(\overline{\mathbb{D}}),
\end{align}
where $\pi^1(\overline{\mathbb{D}})$ denotes the first fundamental group of $\overline{\mathbb{D}}$. Since $\overline{\mathbb{D}}$ is simply connected we obtain $\inv(A) = \exp(A)$ proving that $\inv(A)$ is connected as required.
\end{proof}

\begin{Rem}
In the proof of the previous lemma we do not use the surjective part of the Arens--Royden Theorem, only the much weaker statement that $\inv(A) / \exp(A)$ injects into $\inv(C(\Gamma_A)) / \exp(C(\Gamma_A))$.
\end{Rem}
Let us recall the properties of Tarbard's ingenious indecomposable Banach space construction that are relevant to our purposes, we refer the interested reader to \cite[Chapter~4]{Tarbard} to see the following theorem in its full might.
\begin{Thm}(\cite[Theorem~4.1.1]{Tarbard})\label{tarbard}
There exists an indecomposable Banach space $X_{\infty}$ such that the unital Banach algebras $\mathcal{B}(X_{\infty}) / \mathcal{K}(X_{\infty})$ and $\ell_1(\mathbb{N}_0)$ are isometrically isomorphic.
\end{Thm}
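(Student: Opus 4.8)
Since Theorem~\ref{tarbard} is Tarbard's theorem, a genuine proof means carrying out a Bourgain--Delbaen construction of the kind developed by Argyros and Haydon, so I will only outline the strategy. The plan is to realise $X_\infty$ as a closed subspace of $\ell_\infty(\Gamma)$ for a countable index set $\Gamma=\bigcup_n\Gamma_n$, obtained as the closed linear span of an increasing chain of finite-dimensional ``Bourgain--Delbaen'' subspaces $Y_n\subseteq\ell_\infty(\Gamma_n)$, where each new index $\gamma\in\Gamma_{n+1}\setminus\Gamma_n$ is assigned an evaluation functional $c^*_\gamma$ on $\ell_\infty(\Gamma_n)$ of a prescribed convex-combination type. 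I would use two species of evaluation functionals: ``type $0$'' ones (as in the original Argyros--Haydon space), whose role is to force operators to be compact perturbations of scalars, and ``type $1$'' ones carrying rapidly decreasing weights $m_j^{-1}$ together with a Maurey--Rosenthal style coding injection $\sigma$, whose role is to produce HI-like rigidity. Tarbard's extra ingredient, which I would build into the recursion, is a further layer of evaluation data engineered so that a single ``generalised right shift'' $S$ of the Bourgain--Delbaen basis is a well-defined bounded operator on $X_\infty$ that is strictly singular but not compact, satisfies $\lVert S^n\rVert=1$ for every $n\in\mathbb{N}_0$ (with $S^0=I_{X_\infty}$), and obeys $S^jS^k=S^{j+k}$ with no relations beyond these modulo $\mathcal{K}(X_\infty)$.

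Granting the construction, the decisive step is the operator-structure theorem: every $T\in\mathcal{B}(X_\infty)$ is a compact perturbation of a norm-convergent series $\sum_{n\geq 0}a_nS^n$, and the induced map
\[
\Phi\colon\ell_1(\mathbb{N}_0)\longrightarrow\mathcal{B}(X_\infty)/\mathcal{K}(X_\infty),\qquad (a_n)_n\longmapsto\pi\!\left(\sum_{n=0}^\infty a_nS^n\right)
\]
is a surjective isometric algebra isomorphism, where $\pi$ denotes the quotient map. That $\Phi$ is a well-defined contractive homomorphism is immediate from $\lVert S^n\rVert=1$ and $S^jS^k=S^{j+k}$ modulo $\mathcal{K}(X_\infty)$; the substance lies in (a) surjectivity, i.e.\ that no operators fall outside this list, and (b) the lower bound $\lVert\Phi(a)\rVert\geq\lVert a\rVert_1$. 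For (a) I would first use the ``scalar-plus-strictly-singular'' property of these $\mathcal{L}_\infty$-spaces --- a consequence of the Argyros--Haydon basic inequality --- to write $T=a_0I_{X_\infty}+V$ with $V$ strictly singular, and then run the rapidly increasing sequence (RIS) method on $V$ to extract coefficients $a_1,a_2,\dots$ with $\sum_n\lvert a_n\rvert<\infty$ such that $V-\sum_{n\geq 1}a_nS^n$ acts negligibly on RIS vectors, whence it is compact by the compactness criterion available for $X_\infty$. For (b) I would push suitable exact pairs through $T$ and read off, from the weights $m_j^{-1}$ encoded in the type $1$ functionals, that $\sum_n\lvert a_n\rvert\leq\lVert\pi(T)\rVert$.

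The main obstacle is exactly this step, together with the matching demand it places on the construction: the evaluation data must be rigid enough that the Calkin algebra contains nothing but norm-limits of polynomials in $S$, yet loose enough that $S,S^2,S^3,\dots$ remain genuinely non-compact, linearly independent and of norm precisely $1$, so that the Calkin algebra really is all of $\ell_1(\mathbb{N}_0)$ and the isomorphism is isometric rather than merely bounded below. Calibrating this through the choice of the coding $\sigma$, the weight sequences $(m_j)$, $(n_j)$, and the fine estimates on how operators act on RIS and on exact pairs is the technical heart of the argument.

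It remains to note that indecomposability of $X_\infty$ drops out of the Calkin-algebra description together with the connectedness of $\overline{\mathbb{D}}$ already used in Lemma~\ref{ellinvconnected}. Any idempotent $P\in\mathcal{B}(X_\infty)$ maps to an idempotent $\pi(P)$ in $\ell_1(\mathbb{N}_0)$; via the Gelfand transform $a=(a_n)\mapsto\big(z\mapsto\sum_n a_nz^n\big)$ the algebra $\ell_1(\mathbb{N}_0)$ embeds into $C(\overline{\mathbb{D}})$, whose only idempotents are $0$ and $1$ because $\overline{\mathbb{D}}$ is connected; hence $\pi(P)\in\{0,\delta_0\}=\{0,\pi(I_{X_\infty})\}$, so that $P$ or $I_{X_\infty}-P$ is compact and therefore of finite rank (a compact idempotent acts as the identity on its range, forcing that range to be finite-dimensional). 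Consequently one of $P[X_\infty]$, $(I_{X_\infty}-P)[X_\infty]$ is finite-dimensional, so $X_\infty$ cannot be written as a topological direct sum of two closed infinite-dimensional subspaces; that is, $X_\infty$ is indecomposable.
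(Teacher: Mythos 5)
Theorem~\ref{tarbard} is not proved in the paper at all: it is imported verbatim from Tarbard's thesis, and the paper's ``proof'' consists of the citation. So there is nothing internal to compare your argument with; the relevant benchmark is Tarbard's Chapter~4 itself, and at the level of strategy your outline does track it faithfully --- a Bourgain--Delbaen/Argyros--Haydon construction admitting a bounded, strictly singular, non-compact shift-like operator $S$, the representation of every $T \in \mathcal{B}(X_\infty)$ as a compact perturbation of an absolutely convergent series $\sum_{n \geq 0} a_n S^n$, and the isometric identification of the quotient with $\ell_1(\mathbb{N}_0)$ under convolution. Your closing paragraph, deducing indecomposability from the Calkin-algebra description (idempotents of $\ell_1(\mathbb{N}_0)$ are trivial because its Gelfand transform lands in $C(\overline{\mathbb{D}})$ and $\overline{\mathbb{D}}$ is connected, and a compact idempotent has finite rank), is a correct and complete argument given the rest.

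The genuine gap is that everything which makes the theorem true is announced rather than carried out. You do not construct the sets $\Gamma_n$, the two types of evaluation functionals, or the coding $\sigma$; you do not prove that $S$ is bounded, strictly singular and non-compact, nor that $\Vert S^n \Vert = 1$ for all $n$; and the two pivotal estimates --- (a) that $T - \sum_{n\geq 0} a_n S^n$ is compact for suitable $(a_n) \in \ell_1$, and (b) the lower bound $\sum_n \vert a_n \vert \leq \Vert \pi(T) \Vert$ giving isometry rather than mere bounded-below isomorphism --- are precisely the RIS/exact-pair machinery occupying the bulk of Tarbard's argument, and you explicitly defer them as ``the technical heart''. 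As it stands the proposal is a correct road map of the known proof, not a proof: none of the deferred steps is routine, and each would require substantial quantitative work with the weights $(m_j)$, $(n_j)$ and the basic inequality to complete. If your aim was merely to explain why the theorem is plausible and how it is used later (via Example~\ref{exampleell1}, Lemma~\ref{ellinvconnected} and Lemma~\ref{hereditarydf}), the sketch is serviceable; as a self-contained justification of Theorem~\ref{tarbard} it is incomplete.
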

We are now ready to state and prove the main result of this note.
\begin{Thm}\label{main}
The Banach algebra $\mathcal{B}(X_{\infty})$ is Dedekind-finite but does not have stable rank one.
\end{Thm}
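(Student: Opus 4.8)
The strategy is to apply the key algebraic device, Lemma~\ref{hereditarydf}, with $A = \mathcal{B}(X_\infty)$ and $J = \mathcal{K}(X_\infty)$. To do this I need to verify its three hypotheses: that $\widetilde{\mathcal{K}}(X_\infty)$ is Dedekind-finite, that $\mathcal{B}(X_\infty)/\mathcal{K}(X_\infty)$ is Dedekind-finite, and that the quotient map $\pi\colon \mathcal{B}(X_\infty) \to \mathcal{B}(X_\infty)/\mathcal{K}(X_\infty)$ maps $\inv(\mathcal{B}(X_\infty))$ onto $\inv(\mathcal{B}(X_\infty)/\mathcal{K}(X_\infty))$. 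Once these are in place, Lemma~\ref{hereditarydf} immediately gives that $\mathcal{B}(X_\infty)$ is Dedekind-finite.

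For the first hypothesis: since $\mathcal{K}(X_\infty) \subseteq \mathcal{E}(X_\infty)$ (noted just before Lemma~\ref{hereditarydf}), Proposition~\ref{subalgsr1} applies with $J = \mathcal{K}(X_\infty)$, so $\widetilde{\mathcal{K}}(X_\infty)$ has stable rank one, and hence by Lemma~\ref{dfisstr1} it is Dedekind-finite. For the second hypothesis: by Tarbard's Theorem~\ref{tarbard}, $\mathcal{B}(X_\infty)/\mathcal{K}(X_\infty)$ is isometrically isomorphic to $\ell_1(\mathbb{N}_0)$, which is commutative and therefore Dedekind-finite. For the third hypothesis: again via Theorem~\ref{tarbard} the quotient is isomorphic to $\ell_1(\mathbb{N}_0)$, which is commutative, and by Lemma~\ref{ellinvconnected} its group of invertibles is connected; so Lemma~\ref{surjifconn} (applied to $A = \mathcal{B}(X_\infty)$, $J = \mathcal{K}(X_\infty)$) yields $\pi[\inv(\mathcal{B}(X_\infty))] = \inv(\mathcal{B}(X_\infty)/\mathcal{K}(X_\infty))$. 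This completes the proof that $\mathcal{B}(X_\infty)$ is Dedekind-finite.

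For the second assertion — that $\mathcal{B}(X_\infty)$ does \emph{not} have stable rank one — the plan is to transfer the obstruction from $\ell_1(\mathbb{N}_0)$ back up to $\mathcal{B}(X_\infty)$. In Example~\ref{exampleell1} the element $\delta_1 \in \ell_1(\mathbb{N}_0)$ was shown not to lie in $\overline{\inv(\ell_1(\mathbb{N}_0))}$. Pick $T \in \mathcal{B}(X_\infty)$ with $\pi(T) = \delta_1$ under the isometric isomorphism of Theorem~\ref{tarbard}. If $T$ were in $\overline{\inv(\mathcal{B}(X_\infty))}$, then applying the continuous homomorphism $\pi$ (which maps invertibles to invertibles) would force $\delta_1 = \pi(T) \in \overline{\pi[\inv(\mathcal{B}(X_\infty))]} \subseteq \overline{\inv(\ell_1(\mathbb{N}_0))}$, a contradiction. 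Hence $T \notin \overline{\inv(\mathcal{B}(X_\infty))}$, so $\inv(\mathcal{B}(X_\infty))$ is not dense in $\mathcal{B}(X_\infty)$ and $\mathcal{B}(X_\infty)$ does not have stable rank one.

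The argument is essentially an assembly of the machinery already built up, so there is no single hard technical obstacle remaining; the one point requiring a little care is making sure the hypotheses of Lemma~\ref{surjifconn} and Lemma~\ref{hereditarydf} are applied to the correct algebra ($\mathcal{B}(X_\infty)$ itself, using the identification of the quotient with $\ell_1(\mathbb{N}_0)$ only to check the commutativity and connectedness conditions), and noting that $\mathcal{K}(X_\infty)$ is a \emph{proper} closed two-sided ideal so that $\widetilde{\mathcal{K}}(X_\infty)$ is genuinely as described in Remark~\ref{unitasubalg}. The non-stable-rank-one half is the conceptually cleanest part, relying only on the functoriality of closures under continuous homomorphisms.
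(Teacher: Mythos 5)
Your proposal is correct and follows essentially the same route as the paper: the Dedekind-finiteness half verifies the hypotheses of Lemma~\ref{hereditarydf} via Proposition~\ref{subalgsr1}, Lemma~\ref{dfisstr1}, Theorem~\ref{tarbard}, Lemma~\ref{ellinvconnected} and Lemma~\ref{surjifconn}, exactly as in the paper. Your non-stable-rank-one argument, phrased element-wise with a preimage of $\delta_1$, is just an explicit version of the paper's observation that stable rank one passes to the quotient $\mathcal{B}(X_\infty)/\mathcal{K}(X_\infty) \simeq \ell_1(\mathbb{N}_0)$, contradicting Example~\ref{exampleell1}.
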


\begin{proof}
We first show that $B(X_{\infty})$ does not have stable rank one. Assume towards a contradiction that it does. Then it immediately follows that $\mathcal{B}(X_{\infty}) / \mathcal{K}(X_{\infty})$ also has stable rank one, which in view of Theorem \ref{tarbard} is equivalent to $\ell_1(\mathbb{N}_0)$ having stable rank one. This is impossible by Example \ref{exampleell1}. \\
Now we show that $\mathcal{B}(X_{\infty})$ is Dedekind-finite. By Proposition \ref{subalgsr1} we obtain that $\tilde{\mathcal{K}}(X_{\infty})$ has stable rank one so by Lemma \ref{dfisstr1} it is Dedekind-finite. By Example \ref{exampleell1} we have that $\ell_1(\mathbb{N}_0)$ and thus $\mathcal{B}(X_{\infty}) / \mathcal{K}(X_{\infty})$ is also Dedekind-finite. Thus applying Lemmas \ref{ellinvconnected}, \ref{surjifconn} and \ref{hereditarydf} successively, we obtain that $\mathcal{B}(X_{\infty})$ is Dedekind-finite, which completes the proof.
\end{proof}

With the aid of Lemma \ref{dfequiv} we observe the following:
  
\begin{Cor}
No proper, complemented subspace of $X_{\infty}$ is isomorphic to $X_{\infty}$.
\end{Cor}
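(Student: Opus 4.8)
The plan is to apply Lemma~\ref{dfequiv} directly. By Theorem~\ref{main} the Banach algebra $\mathcal{B}(X_{\infty})$ is Dedekind-finite. Lemma~\ref{dfequiv} states that for any Banach space $X$, the algebra $\mathcal{B}(X)$ is Dedekind-finite if and only if no proper, complemented subspace of $X$ is isomorphic to $X$ as a Banach space. Applying the ``only if'' direction of this equivalence with $X = X_{\infty}$ gives exactly the statement of the corollary.

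Concretely, I would argue by contraposition: suppose for contradiction that there is a proper, complemented subspace $Y$ of $X_{\infty}$ with $Y \cong X_{\infty}$. Then by Lemma~\ref{dfequiv}, $\mathcal{B}(X_{\infty})$ would fail to be Dedekind-finite, contradicting Theorem~\ref{main}. Hence no such $Y$ exists.

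There is essentially no obstacle here — the corollary is a one-line deduction combining the already-established Theorem~\ref{main} with the characterisation recalled in Lemma~\ref{dfequiv}. The only thing to be careful about is invoking the correct direction of the ``if and only if'' in Lemma~\ref{dfequiv}, namely that Dedekind-finiteness of $\mathcal{B}(X_{\infty})$ forces the absence of a proper complemented copy of $X_{\infty}$ inside itself.

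Thus the proof reads: By Theorem~\ref{main} the algebra $\mathcal{B}(X_{\infty})$ is Dedekind-finite, so Lemma~\ref{dfequiv} immediately yields that no proper, complemented subspace of $X_{\infty}$ is isomorphic to $X_{\infty}$.
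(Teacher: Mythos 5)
Your proposal is correct and follows exactly the paper's route: the corollary is obtained by combining Theorem~\ref{main} (Dedekind-finiteness of $\mathcal{B}(X_{\infty})$) with the characterisation in Lemma~\ref{dfequiv}, which is precisely how the paper deduces it.
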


We do not know if there is an entirely Banach space-theoretic proof of this result. 
\\
However, we would like to draw the reader's attention to the fact that the previous corollary does not hold in general for indecomposable Banach spaces. This follows directly from a deep result of Gowers and Maurey \cite{GM}. 
\\
We recall that an infinite-dimensional Banach space $X$ is \textit{prime} if it is isomorphic to all its infinite-dimensional, complemented subspaces.
\begin{Thm}(\cite[Section~(4.2) and Theorem~13]{GM})\label{GM}
There exists an indecomposable, prime Banach space.
\end{Thm}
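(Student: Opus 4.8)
The plan is to produce such a space by modifying the Gowers--Maurey hereditarily indecomposable construction so as to accommodate a shift, along the lines of \cite[Section~(4.2)]{GM}; I would not expect a soft argument, since an indecomposable prime space is automatically isomorphic to each of its hyperplanes and hence, by Lemma~\ref{dfequiv}, cannot have a Dedekind-finite algebra of operators, in sharp contrast with $X_{\infty}$. Concretely, one fixes a Schlumprecht-type function $f$ (say $f(n)=\log_2(n+1)$) and an injective coding function $\sigma$ on finite sequences of successive functionals, and defines a norm on $c_{00}(\mathbb{N})$ implicitly, via a Tsirelson-type fixed-point equation whose norming set is generated by the usual Schlumprecht-type operations together with ``special functionals'' that, through $\sigma$, code rapidly increasing sequences (RIS) of previously constructed functionals. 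The new requirement, absent from the HI case, is to arrange the norming set so that the norm is invariant under the forward shift $e_n \mapsto e_{n+1}$ and carries a compatible backward shift; then the right shift $S$ is an isometry of the completion $X$ onto the closed hyperplane $\overline{\mathrm{span}}\,\{e_n : n \ge 2\}$, and in particular $X$ is isomorphic to this hyperplane.

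The substantive step is the operator dichotomy for $X$, the analogue of \cite[Theorem~13]{GM}: every $T \in \mathcal{B}(X)$ has the form $T = \sum_{n \in \mathbb{Z}} a_n S^n + R$, where $\sum_n |a_n| < \infty$ (with $S^{-n}$ denoting the $n$-th power of the backward shift) and $R$ is strictly singular. One establishes this by the standard HI method: using RIS, the basic inequality and the lower $\ell_1^m$-estimates it yields for conditioned combinations of RIS vectors, one shows that on a suitable block subspace $T$ cannot distort the shift structure, and then reads off the coefficients $a_n$ from the action of $T$ on long runs of consecutive basis vectors. Granting this, $\mathcal{B}(X)/\mathcal{S}(X)$ is isomorphic to the convolution algebra $\ell_1(\mathbb{Z})$, which is commutative and semisimple with connected character space $\mathbb{T}$; since a continuous $\{0,1\}$-valued function on $\mathbb{T}$ is constant, the only idempotents of $\ell_1(\mathbb{Z})$ are $0$ and $1$. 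Hence every idempotent $P \in \mathcal{B}(X)$ satisfies $P \in \mathcal{S}(X)$ or $I_X - P \in \mathcal{S}(X)$; as $I_X + R$ is Fredholm for every strictly singular $R$, this forces $\Ran(P)$ or $\Ker(P)$ to be finite-dimensional, so that every complemented subspace of $X$ is finite-dimensional or finite-codimensional and $X$ is indecomposable. (It is not HI, since $S$ is Fredholm of index $-1$.)

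Primeness then follows formally. First, every closed hyperplane of $X$ is isomorphic to $X$: if $H_1 \ne H_2$ are closed hyperplanes, then $H_1 + H_2 = X$ and $H_1 \cap H_2$ has codimension $1$ in each, so $H_1 \cong (H_1 \cap H_2) \oplus \mathbb{C} \cong H_2$, while the particular hyperplane $\overline{\mathrm{span}}\,\{e_n : n \ge 2\} = \Ran(S)$ is isomorphic to $X$ by construction; an induction on codimension then shows that every finite-codimensional subspace of $X$ is isomorphic to $X$. On the other hand, if $Y$ is an infinite-dimensional complemented subspace of $X$, write $X = Y \oplus Z$ with $Z$ closed; indecomposability forces $Z$ to be finite-dimensional, so $Y$ has finite codimension. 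Combining the two observations, every infinite-dimensional complemented subspace of $X$ is isomorphic to $X$, i.e. $X$ is prime.

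The main obstacle is the dichotomy of the second paragraph: one must arrange the coding and the implicit norm to be at once HI-like \emph{and} compatible with a shift that is invertible modulo finite-rank operators, and then push the entire rapidly-increasing-sequence and basic-inequality apparatus through this more rigid setting. This is genuinely more demanding than the situation of $X_{\infty}$ in Theorem~\ref{tarbard}, where the shift fails to be bounded below, and it constitutes the technical heart of \cite{GM}; once the dichotomy is in hand, the deductions above are routine.
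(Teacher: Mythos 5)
The paper does not actually prove this statement—it is quoted verbatim from Gowers--Maurey—and your sketch follows precisely the route of that cited work: the shift-compatible modification of the HI construction (the space $X_S$ of \cite[Section~(4.2)]{GM}), the representation of every operator as an absolutely summable Laurent series in the shift plus a strictly singular operator, and the correct deduction from this of indecomposability (no nontrivial idempotents in the commutative quotient $\mathcal{B}(X)/\mathcal{S}(X)$) and of primeness (isomorphism with all finite-codimensional subspaces plus the complemented-subspace dichotomy). So your proposal matches the cited proof in approach; the only caveat is that its technical core—the construction of the norm and the operator dichotomy—is, as you yourself note, deferred to the Gowers--Maurey machinery, exactly as the paper defers to it by citation.
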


In fact, with the help of two easy lemmas we can say a bit more. In order to do this let us recall the following well-known result, see for example the second part of \cite[Corollary~1.5]{ringfinofopalgs}:
\begin{Lem}\label{idempeqbanach}
Let $X$ be a Banach space, let $P,Q \in \mathcal{B}(X)$ be idempotents. Then $P \sim Q$ if and only if $\Ran(P) \simeq \Ran(Q)$.
\end{Lem}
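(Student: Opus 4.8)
The plan is to prove the two implications directly, in each case constructing the required data explicitly from the ranges of the idempotents. Recall first that $\Ran(P) = \Ker(I_X - P)$ and $\Ran(Q)$ are automatically \emph{closed} subspaces of $X$, since $P$ and $Q$ are bounded idempotents; so the assertion ``$\Ran(P) \simeq \Ran(Q)$'' refers to an isomorphism of Banach spaces in the usual sense (a bounded linear bijection with bounded inverse).

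For the forward implication I would start from $A, B \in \mathcal{B}(X)$ with $AB = P$ and $BA = Q$. The first step is to observe that $B$ carries $\Ran(P)$ into $\Ran(Q)$ and $A$ carries $\Ran(Q)$ into $\Ran(P)$: indeed, if $x \in \Ran(P)$, i.e.\ $Px = x$, then $Bx = B(ABx) = (BA)(Bx) = Q(Bx)$, so $Bx \in \Ran(Q)$, and the symmetric computation works for $A$. The second step is then immediate: for $x \in \Ran(P)$ one gets $A(Bx) = (AB)x = Px = x$, and for $y \in \Ran(Q)$ one gets $B(Ay) = (BA)y = Qy = y$, so the restriction $B|_{\Ran(P)} \colon \Ran(P) \to \Ran(Q)$ is a bounded bijection whose (bounded) inverse is $A|_{\Ran(Q)}$; hence $\Ran(P) \simeq \Ran(Q)$.

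For the reverse implication I would fix a Banach-space isomorphism $T \colon \Ran(P) \to \Ran(Q)$ with bounded inverse $T^{-1}$, and set $B := T \circ P$ and $A := T^{-1} \circ Q$, viewed as elements of $\mathcal{B}(X)$ (both are bounded, being composites of bounded maps). The verification then splits into the two identities $AB = P$ and $BA = Q$: for any $x \in X$ we have $Px \in \Ran(P)$, hence $ABx = T^{-1}\bigl(Q(T(Px))\bigr) = T^{-1}(T(Px)) = Px$, using that $T(Px) \in \Ran(Q)$ is fixed by $Q$; the computation for $BA = Q$ is entirely symmetric. This shows $P \sim Q$.

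I do not expect any real obstacle here: the content is purely bookkeeping with the defining relations of idempotents, and the only points requiring a little care are that the restrictions and composites appearing above are genuinely bounded operators (which is automatic from the boundedness of $P$, $Q$, $T$ and $T^{-1}$) and that the ranges are closed. This is also why the lemma is quoted as well known, being recorded for instance as the second part of \cite[Corollary~1.5]{ringfinofopalgs}.
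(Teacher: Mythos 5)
Your proof is correct, and it is essentially the standard argument: the paper itself does not prove this lemma but quotes it from \cite[Corollary~1.5]{ringfinofopalgs}, where the same explicit construction (restricting $a$ and $b$ to the ranges in one direction, and composing an isomorphism of the ranges with $P$ and $Q$ in the other) is used. Both directions of your verification are sound, including the points about closedness of the ranges and boundedness of the composites.
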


\begin{Lem}\label{indecnotpropinf}
Let $X$ be an indecomposable Banach space. Then $\mathcal{B}(X)$ cannot be properly infinite.
\end{Lem}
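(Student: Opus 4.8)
The plan is to argue by contradiction, reducing everything to the dictionary between idempotents in $\mathcal{B}(X)$ and complemented subspaces of $X$ supplied by Lemma \ref{idempeqbanach}. Suppose $\mathcal{B}(X)$ were properly infinite. Then there are orthogonal idempotents $P, Q \in \mathcal{B}(X)$ with $P \sim I_X$ and $Q \sim I_X$. Since an idempotent in $\mathcal{B}(X)$ is a bounded projection, $\Ran(P)$ and $\Ran(Q)$ are closed (complemented) subspaces of $X$, and Lemma \ref{idempeqbanach} turns $P \sim I_X$ into $\Ran(P) \simeq \Ran(I_X) = X$, and likewise $\Ran(Q) \simeq X$. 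In particular both ranges are infinite-dimensional, as $X$ is.

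Next I would translate the ring-theoretic orthogonality $PQ = QP = 0$ into the language of subspaces. From $QP = 0$ one gets $\Ran(P) \subseteq \Ker(Q)$: indeed $Q(Px) = (QP)x = 0$ for every $x \in X$. Hence $\Ker(Q)$ contains the infinite-dimensional subspace $\Ran(P)$, so $\Ker(Q)$ is itself infinite-dimensional.

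Finally, since $Q$ is a bounded idempotent, $X = \Ran(Q) \oplus \Ker(Q)$ is a topological direct sum of two closed subspaces, and by the two preceding steps both summands are infinite-dimensional. This contradicts the indecomposability of $X$, which finishes the proof.

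The argument is short, and there is no genuine obstacle: the only points requiring (entirely routine) care are the correct invocation of Lemma \ref{idempeqbanach} to read off that $\Ran(P) \simeq X \simeq \Ran(Q)$, the standard fact that a bounded idempotent splits $X$ as the topological direct sum of its closed range and closed kernel, and the elementary observation that orthogonality of the projections $P$ and $Q$ yields the inclusion $\Ran(P) \subseteq \Ker(Q)$. One could also symmetrically use $PQ = 0$ to get $\Ran(Q) \subseteq \Ker(P)$, but this is not needed for the contradiction.
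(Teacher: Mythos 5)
Your proof is correct and follows essentially the same route as the paper: argue by contradiction, use Lemma \ref{idempeqbanach} to get $\Ran(P) \simeq X \simeq \Ran(Q)$, and exploit orthogonality together with the splitting of $X$ induced by an idempotent. The only (cosmetic) difference is that you contradict indecomposability directly via $X = \Ran(Q) \oplus \Ker(Q)$ with $\Ran(P) \subseteq \Ker(Q)$, whereas the paper first uses indecomposability to force $\Ran(I_X - P)$ to be finite-dimensional and then contradicts $\Ran(Q) \simeq X$.
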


\begin{proof}
Assume towards a contradiction that $\mathcal{B}(X)$ is properly infinite. Then there exist $P,Q \in \mathcal{B}(X)$ orthogonal idempotents such that $P,Q \sim I_X$. By Lemma \ref{idempeqbanach} this is equivalent to $\Ran(P) \simeq X \simeq \Ran(Q)$. Clearly $X = \Ran(P) \oplus \Ran(I_X-P)$ and since $\Ran(P)$ is infinite-dimensional and $X$ is indecomposable we obtain that $\Ran(I_X -P)$ must be finite-dimensional. Consequently, the range of $Q = Q(I_X - P)$ is finite-dimensional, contradicting $\Ran(Q) \simeq X$.
\end{proof}
An infinite-dimensional Banach space $X$ is \textit{primary} if for every $P \in \mathcal{B}(X)$ idempotent either $\Ker(P)$ or $\Ran(P)$ is isomorphic to $X$. A prime Banach space is clearly primary.
\begin{Lem}\label{primarynotdf}
Let $X$ be a primary Banach space. Then $\mathcal{B}(X)$ is Dedekind-infinite.
\end{Lem}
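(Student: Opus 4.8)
The plan is to exhibit, inside an arbitrary primary infinite-dimensional Banach space $X$, a proper complemented subspace isomorphic to $X$, and then apply Lemma \ref{dfequiv}. The natural candidate is a hyperplane, obtained as the kernel of a norm-one rank-one idempotent.

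Concretely, I would first fix a norm-one vector $x \in X$ and, by the Hahn--Banach theorem, a functional $f \in X^*$ with $f(x)=1$; the operator $P := f(\cdot)\, x \in \mathcal{B}(X)$ is then idempotent, with $\Ran(P) = \mathbb{C}x$ one-dimensional and $\Ker(P) = \ker f$ a closed hyperplane. Since $P$ is a bounded idempotent we have the topological direct sum $X = \Ran(P) \oplus \Ker(P)$, so $\Ker(P)$ is complemented, and it is a proper subspace because $x \notin \ker f$. Next, as $X$ is infinite-dimensional, $\Ran(P) = \mathbb{C}x$ is certainly not isomorphic to $X$, so the defining property of a primary space forces $\Ker(P) \simeq X$. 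Thus $\ker f$ is a proper, complemented subspace of $X$ isomorphic to $X$, and Lemma \ref{dfequiv} yields that $\mathcal{B}(X)$ is Dedekind-infinite.

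I do not expect a genuine obstacle here; the only points requiring a word of care are that the witnessing subspace $\Ker(P)$ is genuinely proper (immediate from $x \notin \ker f$) and complemented (immediate from boundedness of the idempotent $P$). If one prefers to argue at the level of idempotents rather than through Lemma \ref{dfequiv}, one can instead invoke Lemma \ref{idempeqbanach}: the idempotent $I_X - P$ satisfies $\Ran(I_X - P) = \ker f \simeq X = \Ran(I_X)$, hence $I_X - P \sim I_X$ while $I_X - P \neq I_X$, which directly witnesses that $\mathcal{B}(X)$ is not Dedekind-finite. Either route is short; the substance is entirely contained in choosing the rank-one projection and reading off that its kernel is a hyperplane.
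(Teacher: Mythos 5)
Your proposal is correct and is essentially the paper's own argument: the paper simply takes an idempotent with one-dimensional kernel (your $I_X - P$), notes that primariness forces its range to be isomorphic to $X$, and applies Lemma \ref{idempeqbanach} to get an idempotent equivalent to, but distinct from, $I_X$ — exactly your second route. Your first route through Lemma \ref{dfequiv} is only a cosmetic repackaging of the same decomposition, so there is nothing substantively different to compare.
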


\begin{proof}
Let $P \in \mathcal{B}(X)$ be an idempotent with $\dim(\Ker(P))=1$. Since $X$ is primary, $\Ran(P) \simeq X$ holds. By Lemma \ref{idempeqbanach} this is equivalent to $P \sim I_X$. If $\mathcal{B}(X)$ were DF then $P = I_X$ which is impossible.
\end{proof}

Theorem \ref{GM} ensures that the following corollary of Lemmas \ref{indecnotpropinf}, \ref{primarynotdf} is not vacuous:

\begin{Cor}
For an indecomposable, primary Banach space $X$ the algebra of operators $\mathcal{B}(X)$ is Dedekind-infinite but not properly infinite.
\end{Cor}

\begin{Rem}
	Let $K$ be a countable compact metric space. By a deep result \cite[Theorem~B]{MPZ} of Motakis, Puglisi and Zisimopoulou, there exists a Banach space $X_K$ such that $\mathcal{B}(X_K) / \mathcal{K}(X_K) \simeq C(K)$. In \cite{rieffel}, Rieffel introduced the notion of \textit{left stable rank} in general for unital Banach algebras. We observe here that $\mathcal{B}(X_K)$ has left stable rank $1$ or $2$, in the sense of Rieffel. By \cite[Corollary~4.12]{rieffel}, to show this, it is enough to observe that $\tilde{\mathcal{K}}(X_K)$ and $C(K)$ have both stable rank one. The former follows from Proposition \ref{subalgsr1}. For the latter, in view of \cite[Proposition~1.7]{rieffel}, it is enough to see that $K$ is zero-dimensional. This in particular follows if $K$ is totally disconnected, by \cite[Proposition~3.1.7]{Arh}. But this is clearly holds, since $K$ is countable and compact. We do not know however if for certain $K$-s the Banach algebra $\mathcal{B}(X_K)$ is DF but does not have stable rank one; our ``lifting invertible elements'' method is not applicable here, since $K$ is totally disconnected.
\end{Rem}

After reading the first draft of this paper, it was suggested to us by Piotr Koszmider that the $C(K)$-space from \cite{plebanek} is an example of a \textit{real} Banach space such that its algebra of operators is Dedekind-finite but it does not have stable rank one. With his kind permission we include a proof here.

In the following, if $K$ is a compact Hausdorff space, we always consider $C(K)$ as a \textit{real} Banach space. To emphasise this, we write $C(K, \mathbb{R})$ for $C(K)$. Let $K$ be a compact Hausdorff space, let $g \in C(K, \mathbb{R})$ then 
\begin{align}
	M_g : \, C(K, \mathbb{R}) \rightarrow C(K, \mathbb{R}); \quad f \mapsto fg
\end{align}	
is called the \textit{multiplication} operator. An operator $T \in \mathcal{B}(C(K, \mathbb{R}))$ is called a \textit{weak multiplication} if there is a $g \in C(K, \mathbb{R})$ and $S \in \mathcal{S}(C(K, \mathbb{R}))$ such that $T = M_g + S$. \\
We say that an infinite compact Hausdorff space is a \textit{Koszmider space} if every bounded linear operator  on $C(K, \mathbb{R})$ is a weak multiplication. In \cite[Theorem~6.1]{koszmider1} Koszmider showed that assuming the Continuum Hypothesis, both connected and zero-dimensional Koszmider spaces exist. In \cite[Theorem~1.3]{plebanek} Plebanek showed the existence of a connected Koszmider space without any assumptions beyond ZFC. We recall the following two results on Koszmider spaces:

\begin{Thm}(\cite[Theorem~6.5(i)]{dkkkl})\label{koszmiderweakcalkin}
	Let $K$ be a Koszmider space without isolated points. Then $\mathcal{B}(C(K, \mathbb{R})) / \mathcal{S}(C(K, \mathbb{R}))$ and $C(K, \mathbb{R})$ are isomorphic as unital Banach algebras.
\end{Thm}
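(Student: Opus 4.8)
The plan is to realise the isomorphism concretely by sending a function to the coset of the corresponding multiplication operator. Write $A := C(K,\mathbb{R})$ and let $q\colon \mathcal{B}(A)\to \mathcal{B}(A)/\mathcal{S}(A)$ be the quotient map; since $\mathcal{S}(A)$ is a closed two-sided ideal, the quotient is a unital Banach algebra. First I would define $\Phi\colon A\to \mathcal{B}(A)/\mathcal{S}(A)$ by $\Phi(g):=q(M_g)$ and record the routine facts: $g\mapsto M_g$ is an isometric unital algebra homomorphism of $A$ into $\mathcal{B}(A)$, because $M_gM_h=M_{gh}$, $M_{1}=I_A$, and $\Vert M_g\Vert=\Vert g\Vert_\infty$. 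Hence $\Phi$ is a unital algebra homomorphism with $\Vert\Phi(g)\Vert\le\Vert g\Vert_\infty$. Surjectivity of $\Phi$ is exactly the hypothesis that $K$ is a Koszmider space: every $T\in\mathcal{B}(A)$ splits as $T=M_g+S$ with $g\in A$ and $S\in\mathcal{S}(A)$, so $q(T)=\Phi(g)$.

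The substantive point is the injectivity of $\Phi$, i.e.\ that $M_g\in\mathcal{S}(A)$ forces $g=0$, and this is where the assumption that $K$ has no isolated points is used. Arguing by contraposition, suppose $g\neq 0$ and fix $\delta>0$ so that $U:=\{x\in K:|g(x)|>\delta\}$ is nonempty; note $|g|\ge\delta$ on $\overline U$. As $K$ has no isolated points, the nonempty open set $U$ has none either, so one can choose a pairwise disjoint sequence $(U_n)_{n\in\mathbb{N}}$ of nonempty open subsets of $U$ with pairwise disjoint closures, all contained in $U$, and, by Urysohn's lemma, norm-one functions $f_n\in A$ with $\operatorname{supp}(f_n)\subseteq\overline{U_n}$. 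The closed linear span $Y$ of $\{f_n:n\in\mathbb{N}\}$ is infinite-dimensional (indeed isometric to $c_0$, by disjointness of the supports), and for every $f\in Y$ one has $\Vert M_g f\Vert_\infty=\Vert gf\Vert_\infty\ge\delta\Vert f\Vert_\infty$, since $f$ vanishes off $\{|g|\ge\delta\}$. Thus $M_g|_Y$ is an isomorphism onto its range, contradicting $M_g\in\mathcal{S}(A)$; hence $\Phi$ is injective.

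It remains only to upgrade the algebraic isomorphism $\Phi$ to a topological one: being a bounded linear bijection between the Banach spaces $A$ and $\mathcal{B}(A)/\mathcal{S}(A)$, $\Phi$ has bounded inverse by the inverse mapping theorem, and since it is in addition a unital algebra homomorphism it is the desired isomorphism of unital Banach algebras. I expect the only real obstacle to be the injectivity step — concretely, producing the infinite-dimensional subspace on which $M_g$ is bounded below — where the ``no isolated points'' hypothesis is indispensable (if $K$ had an isolated point $x_0$, then $M_g$ with $g$ supported at $x_0$ would be a nonzero finite-rank, hence strictly singular, operator); everything else is bookkeeping and an appeal to the open mapping theorem.
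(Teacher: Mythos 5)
Your argument is correct, and since the paper gives no proof of this statement (it is quoted verbatim from \cite[Theorem~6.5(i)]{dkkkl}), there is nothing internal to compare it with; what you write is essentially the standard argument behind the cited result. The map $g\mapsto M_g+\mathcal{S}(C(K,\mathbb{R}))$ is a contractive unital homomorphism, surjectivity is exactly the Koszmider (weak multiplication) property, injectivity is your disjoint-supports construction showing $M_g$ is bounded below on a copy of $c_0$ whenever $g\neq 0$ (this is precisely where the absence of isolated points is needed, as your rank-one counterexample shows), and the open mapping theorem upgrades the bijection to a Banach-algebra isomorphism.
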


The following result is an immediate corollary of Theorems 2.3 and 5.2 in Koszmider's paper \cite{koszmider1}.

\begin{Thm}(\cite{koszmider1})\label{opalgofckoszmiderdf}
	Let $K$ be a connected Koszmider space. Then $C(K, \mathbb{R})$ is not isomorphic to any of its proper, closed subspaces.
\end{Thm}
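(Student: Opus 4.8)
The plan is to argue by contradiction: suppose $C(K,\mathbb{R})$ is isomorphic to one of its proper, closed subspaces $Y$, fix an isomorphism $\phi\colon C(K,\mathbb{R})\to Y$, and set $T:=\iota\circ\phi\in\mathcal{B}(C(K,\mathbb{R}))$, where $\iota\colon Y\hookrightarrow C(K,\mathbb{R})$ is the inclusion map. Then $T$ is bounded below (say $\Vert Tf\Vert\ge c\Vert f\Vert$ for all $f$, with $c>0$) and $\Ran T=Y\subsetneq C(K,\mathbb{R})$; I shall reach a contradiction by showing that $T$ is in fact surjective. Since $K$ is connected and infinite it has no isolated points, and since $K$ is a Koszmider space we may write $T=M_g+S$ with $g\in C(K,\mathbb{R})$ and $S\in\mathcal{S}(C(K,\mathbb{R}))$.

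The decisive step, and the one I expect to be the main obstacle, is to show that $g$ vanishes nowhere on $K$. Suppose $g(k_0)=0$ and fix $\varepsilon>0$. The set $U_\varepsilon:=\{k\in K:\ |g(k)|<\varepsilon\}$ is open and nonempty, hence infinite (as $K$, and therefore the open set $U_\varepsilon$, has no isolated points); a routine Urysohn argument then produces infinitely many norm-one functions with pairwise disjoint supports, each support contained in $U_\varepsilon$, so $Z_\varepsilon:=\{f\in C(K,\mathbb{R}):\ f\equiv 0\text{ on }K\setminus U_\varepsilon\}$ is an infinite-dimensional closed subspace. On $Z_\varepsilon$ one has $\Vert M_g f\Vert\le\varepsilon\Vert f\Vert$, because $|g|<\varepsilon$ on the support of every $f\in Z_\varepsilon$. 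The restriction $S|_{Z_\varepsilon}$ is again strictly singular, so it is not bounded below on the infinite-dimensional space $Z_\varepsilon$, and we may choose $f_n\in Z_\varepsilon$ with $\Vert f_n\Vert=1$ and $\Vert Sf_n\Vert\to 0$. Then $c\le\Vert Tf_n\Vert\le\Vert M_g f_n\Vert+\Vert Sf_n\Vert\le\varepsilon+\Vert Sf_n\Vert$, and letting $n\to\infty$ yields $c\le\varepsilon$. Since $\varepsilon>0$ was arbitrary this forces $c\le 0$, a contradiction; hence $g(k)\neq 0$ for every $k\in K$.

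Once $g$ is known to vanish nowhere on the compact space $K$, the function $1/g$ lies in $C(K,\mathbb{R})$, so $M_g$ is invertible in $\mathcal{B}(C(K,\mathbb{R}))$ with inverse $M_{1/g}$, and $M_{1/g}T=I_{C(K,\mathbb{R})}+M_{1/g}S$. Here $M_{1/g}S$ is strictly singular, hence inessential, hence a Riesz operator, so $I_{C(K,\mathbb{R})}+M_{1/g}S$ is a Fredholm operator of index zero by the standard spectral properties of Riesz operators (cf.\ \cite{caradus}). Since $T$, and therefore $M_{1/g}T$, is bounded below, $M_{1/g}T$ is injective; being Fredholm of index zero it is then surjective as well. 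Hence $T=M_g\bigl(M_{1/g}T\bigr)$ is surjective, i.e.\ $Y=\Ran T=C(K,\mathbb{R})$, contradicting the properness of $Y$ and completing the argument. I remark that connectedness enters only to guarantee that $K$ has no isolated points, whereas the weak-multiplication description of $\mathcal{B}(C(K,\mathbb{R}))$ is essential and is precisely what makes the argument run.
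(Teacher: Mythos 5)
Your argument is correct, and it is worth noting that it does something the paper itself does not: the paper states this theorem without proof, importing it as ``an immediate corollary of Theorems 2.3 and 5.2'' of Koszmider's paper, whereas you derive it directly from the definition of a Koszmider space used here (every operator is $M_g+S$ with $g\in C(K,\mathbb{R})$ and $S$ strictly singular) together with standard Riesz--Fredholm perturbation theory. Each step checks out: if $T$ is bounded below by $c>0$ and $g(k_0)=0$, then for every $\varepsilon>0$ the open set $U_\varepsilon=\{|g|<\varepsilon\}$ is nonempty, hence infinite because a connected infinite compact Hausdorff space has no isolated points, so the closed subspace $Z_\varepsilon$ of functions vanishing off $U_\varepsilon$ is infinite-dimensional (disjointly supported Urysohn functions), $\Vert M_g\vert_{Z_\varepsilon}\Vert\leq\varepsilon$, and strict singularity of $S$ forbids $S$ from being bounded below on $Z_\varepsilon$, giving $c\leq\varepsilon$ and a contradiction; then $M_{1/g}$ inverts $M_g$, and $M_{1/g}T=I+M_{1/g}S$ is an injective strictly singular (hence inessential, hence Riesz) perturbation of the identity, so it is Fredholm of index zero and therefore onto, forcing $\Ran T=C(K,\mathbb{R})$. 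What your route buys is a self-contained proof within this paper's toolkit (the same facts $\mathcal{S}\subseteq\mathcal{E}$ and inessential $\Rightarrow$ Riesz from Caradus--Pfaffenberger--Yood are already invoked in Proposition \ref{subalgsr1}), and in fact the slightly more general statement that for any Koszmider space without isolated points every bounded-below operator on $C(K,\mathbb{R})$ is surjective, connectedness entering only to exclude isolated points. The one caveat is attributional rather than mathematical: Koszmider's own Theorems 2.3 and 5.2 are formulated for the weaker notion of weak multipliers (a condition on adjoints), so they cover more than the weak-multiplication definition adopted in this paper; your proof does not recover that generality, but for the statement as formulated here it is complete.
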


\begin{Lem}\label{realcknotsr1}
	Let $K$ be a compact connected Hausdorff space with at least two points. Then $C(K, \mathbb{R})$ does not have stable rank one.
\end{Lem}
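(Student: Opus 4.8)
The plan is to use the completely elementary description of $\inv(C(K,\mathbb{R}))$ afforded by the connectedness of $K$. Recall that $f \in C(K,\mathbb{R})$ is invertible exactly when it has no zero on $K$ (continuity of $1/f$ plus compactness of $K$ gives $1/f \in C(K,\mathbb{R})$; conversely a zero of $f$ obviously obstructs invertibility). Since $K$ is connected and $f$ is real-valued and continuous, $f[K]$ is a connected subset of $\mathbb{R}$, i.e.\ an interval, so $0 \notin f[K]$ forces $f[K] \subseteq (0,\infty)$ or $f[K] \subseteq (-\infty,0)$. Hence
\[
\inv(C(K,\mathbb{R})) = \{\, f : f(x) > 0 \text{ for all } x \in K \,\} \;\cup\; \{\, f : f(x) < 0 \text{ for all } x \in K \,\},
\]
a disjoint union of two convex open ``sign classes''.

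Next I would manufacture a single witness to non-density. As $K$ is compact Hausdorff it is normal, and since $K$ has at least two points we may fix distinct $x_0, x_1 \in K$ and apply Urysohn's lemma to obtain $h \in C(K,\mathbb{R})$ with $0 \le h \le 1$, $h(x_0) = 0$ and $h(x_1) = 1$. Set $g := h - \tfrac{1}{2}$ (the right-hand side understood with $\tfrac12$ the constant function). Then $g(x_0) = -\tfrac{1}{2} < 0 < \tfrac{1}{2} = g(x_1)$, so by the description above $g \notin \inv(C(K,\mathbb{R}))$; the real content is the stronger claim that $g$ is not even approximable by invertibles.

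Finally I would verify $g \notin \overline{\inv(C(K,\mathbb{R}))}$, which at once yields that $\inv(C(K,\mathbb{R}))$ is not norm-dense and hence that $C(K,\mathbb{R})$ does not have stable rank one. Indeed, suppose $g = \lim_n g_n$ with each $g_n \in \inv(C(K,\mathbb{R}))$; by the dichotomy each $g_n$ lies in one of the two sign classes, so after passing to a subsequence all $g_n$ lie in the same one. If all $g_n$ are everywhere positive, then evaluation at $x_0$ (a norm-continuous linear functional on $C(K,\mathbb{R})$) gives $g(x_0) = \lim_n g_n(x_0) \ge 0$, contradicting $g(x_0) = -\tfrac12$; the everywhere-negative case is symmetric, using evaluation at $x_1$. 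I do not expect any real obstacle in this argument; the one point worth flagging is that it genuinely uses the real scalar field: over $\mathbb{C}$ the nowhere-vanishing functions need not decompose into finitely many such classes and can indeed be dense (e.g.\ for $K = [0,1]$), which is exactly why treating $C(K)$ as a \emph{real} Banach space is essential here.
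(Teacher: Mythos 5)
Your proposal is correct and takes essentially the same approach as the paper: Urysohn's lemma produces a function taking values $\pm\tfrac12$ (resp.\ $\pm 1$) at two points, and connectedness of $K$ forces any nearby (or any invertible) function to vanish somewhere, so the sign-changing function lies outside $\overline{\inv(C(K,\mathbb{R}))}$. The only cosmetic difference is that the paper shows directly that every $k$ with $\Vert k-h\Vert<1/2$ has a zero, whereas you first record the dichotomy that invertibles have constant sign and then argue by a subsequence and evaluation functionals.
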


\begin{proof}
	Let $x,y \in K$ be distinct. Since $K$ is Hausdorff, we can take $U,V$ disjoint open subsets of $K$ such that $x \in U$ and $y \in V$. By Urysohn's Lemma there exist $f,g \in C(K, \mathbb{R})$ supported on $U$ and $V$, respectively, such that $f(x)=1$ and $g(y)=1$. Let $h:= f - g$, clearly $h \in C(K,\mathbb{R})$ is such that $h(x)=1$ and $h(y)=-1$. Suppose $k \in C(K, \mathbb{R})$ is such that $\Vert k - h \Vert < 1/2$, thus $\vert k(x) -1 \vert < 1/2$ and $\vert k(y) +1 \vert < 1/2$. In particular, $1/2 < k(x)$ and $-1/2 > k(y)$, thus by continuity of $k$ and connectedness of $K$ we obtain that there is $z \in K$ such that $k(z) =0$. Thus $k \in C(K, \mathbb{R})$ cannot be invertible. This shows that $\inv(C(K, \mathbb{R}))$ is not dense in $C(K, \mathbb{R})$, as required.
\end{proof}

\begin{Rem}
	We note however that Lemma \ref{realcknotsr1} is not true in general for \textit{complex} $C(K)$-spaces; indeed, $[0,1]$ is one-dimensional, so by \cite[Proposition~1.7]{rieffel} $C([0,1], \mathbb{C})$ has stable rank one.
\end{Rem}

\begin{Prop}
	Let $K$ be a connected Koszmider space. Then $\mathcal{B}(C(K, \mathbb{R}))$ is Dedekind-finite but it does not have stable rank one.
\end{Prop}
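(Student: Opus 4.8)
The plan is to follow the scheme of the proof of Theorem~\ref{main} for the failure of stable rank one, but to deduce Dedekind-finiteness directly from Koszmider's structural result rather than from the lifting-of-invertibles argument used there for $X_{\infty}$; the latter is not conveniently available here, both because $C(K,\mathbb{R})$ is a \emph{real} algebra (so the spectral argument behind Proposition~\ref{subalgsr1} does not apply) and because $\inv(C(K,\mathbb{R}))$ has two connected components, so Lemma~\ref{surjifconn} cannot be invoked verbatim.

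First I would note that a connected Koszmider space $K$ has no isolated points: $K$ is infinite by definition, hence not a singleton, and in a connected $T_1$-space an open singleton would be clopen and therefore all of $K$. Thus Theorem~\ref{koszmiderweakcalkin} applies and yields an isomorphism of unital Banach algebras $\mathcal{B}(C(K,\mathbb{R}))/\mathcal{S}(C(K,\mathbb{R})) \simeq C(K,\mathbb{R})$. Suppose, towards a contradiction, that $\mathcal{B}(C(K,\mathbb{R}))$ has stable rank one. The quotient map onto $\mathcal{B}(C(K,\mathbb{R}))/\mathcal{S}(C(K,\mathbb{R}))$ is a continuous surjective unital homomorphism carrying invertibles to invertibles, so the image of $\inv(\mathcal{B}(C(K,\mathbb{R})))$ is a dense subset of $\inv\bigl(\mathcal{B}(C(K,\mathbb{R}))/\mathcal{S}(C(K,\mathbb{R}))\bigr)$; hence the quotient, and therefore $C(K,\mathbb{R})$, has stable rank one. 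Since $K$ is compact, connected and has at least two points, this contradicts Lemma~\ref{realcknotsr1}.

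For Dedekind-finiteness I would simply invoke Theorem~\ref{opalgofckoszmiderdf}, which says that $C(K,\mathbb{R})$ is not isomorphic to any of its proper closed subspaces. As every complemented subspace is in particular closed, no proper complemented subspace of $C(K,\mathbb{R})$ is isomorphic to $C(K,\mathbb{R})$, and Lemma~\ref{dfequiv} then gives that $\mathcal{B}(C(K,\mathbb{R}))$ is Dedekind-finite.

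The proof is thus essentially an assembly of Theorems~\ref{koszmiderweakcalkin} and~\ref{opalgofckoszmiderdf} with Lemmas~\ref{realcknotsr1} and~\ref{dfequiv}, and the only point requiring a little care is the bookkeeping around the real scalar field: one should observe that the two external facts used above — that stable rank one passes to quotients by closed ideals, and the characterisation of Dedekind-finiteness of $\mathcal{B}(X)$ in Lemma~\ref{dfequiv} — hold equally for real Banach algebras, since their proofs use no complex spectral theory. I expect this to be the only (very minor) obstacle; there is no substantial difficulty here.
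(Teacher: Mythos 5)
Your proposal is correct and follows essentially the same route as the paper: the failure of stable rank one is obtained by passing to the quotient $\mathcal{B}(C(K,\mathbb{R}))/\mathcal{S}(C(K,\mathbb{R})) \simeq C(K,\mathbb{R})$ via Theorem~\ref{koszmiderweakcalkin} and contradicting Lemma~\ref{realcknotsr1}, while Dedekind-finiteness follows from Theorem~\ref{opalgofckoszmiderdf} together with Lemma~\ref{dfequiv}. Your additional checks (that a connected Koszmider space has no isolated points, and that the quoted facts survive in the real-scalar setting) are correct details that the paper leaves implicit.
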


\begin{proof}
	Assume towards a contradiction that $\mathcal{B}(C(K, \mathbb{R}))$ has stable rank one. Then \\
$\mathcal{B}(C(K, \mathbb{R})) / \mathcal{S}(C(K, \mathbb{R}))$ also has stable rank one, which in view of Theorem \ref{koszmiderweakcalkin} is equivalent to $C(K, \mathbb{R})$ having stable rank one. This is impossible by Lemma \ref{realcknotsr1}. \\
	The fact that $\mathcal{B}(C(K, \mathbb{R}))$ is DF follows from Lemma \ref{dfequiv} and Theorem \ref{opalgofckoszmiderdf}.
\end{proof}	

\begin{Rem}
	We remark in passing that both the real and complex examples share the following property: Tarbard's space $X_{\infty}$ and $C(K, \mathbb{R})$ (where $K$ is a connected Koszmider space) are both indecomposable, but not hereditarily indecomposable Banach spaces. Indeed, $X_{\infty}$ is indecomposable by \cite[Proposition~4.1.5]{Tarbard} but not hereditarily indecomposable by \cite[Proposition~4.1.4]{Tarbard}. If $K$ is a connected Koszmider space then by \cite[Theorem~2.5]{koszmider1} it follows that $C(K, \mathbb{R})$ is indecomposable. On the other hand, it is well-known that for any infinite compact Hausdorff space $K$, $C(K)$ cannot be hereditarily indecomposable. This follows from the fact that if $K$ is such then $C(K)$ has a closed subspace (isometrically) isomorphic to $c_0$, see for example, \cite[Lemma~2.5(d)]{rosenthal}.
\end{Rem}		

\begin{Ack}
The author is grateful to his supervisors Dr Yemon Choi and Dr Niels J. Laustsen (Lancaster) for their invaluable advice during the preparation of this note. He is indebted to Dr Tomasz Kania (Prague), Professor Piotr Koszmider (Warsaw) and the anonymous referee for the encouragement and for carefully reading the first version of this note. He also acknowledges the financial support from the Lancaster University Faculty of Science and Technology and acknowledges with thanks the partial funding received from GA\v{C}R project 19-07129Y; RVO 67985840 (Czech Republic). 
\end{Ack}
	


\Addresses

\end{document}